\numberwithin{equation}{section}
\newtheorem{prop}{Proposition}
\newtheorem{lemma}[prop]{Lemma}
\newtheorem{thm}[prop]{Theorem}
\newtheorem{theorem}[prop]{Theorem}
\numberwithin{prop}{section}
\theoremstyle{definition}
\newtheorem{defn}[prop]{Definition}
\newtheorem{ex}[prop]{Example}
\newtheorem{rmk}[prop]{Remark}
\newtheorem{remark}[prop]{Remark}
\newcommand{\dt}{\frac{\partial}{\partial t}}
\newcommand{\brs}[1]{\left| #1 \right|}
\newcommand{\gG}{\Gamma}
\newcommand{\gD}{\Delta}
\newcommand{\gd}{\delta}
\newcommand{\gw}{\omega}
\newcommand{\ga}{\alpha}
\newcommand{\gb}{\beta}
\renewcommand{\gg}{\gamma}
\renewcommand{\ge}{\epsilon}
\newcommand{\N}{\nabla}
\renewcommand{\bar}[1]{\overline{#1}}
\newcommand{\del}{\partial}
\newcommand{\HH}{\mathcal H}
\newcommand{\YY}{\mathcal Y}
\newcommand{\ZZ}{\mathcal Z}
\newcommand{\FF}{\mathcal F}
\newcommand{\IP}[1]{\langle #1\rangle}
\newcommand{\CB}[1]{[ #1 ]}
\newcommand{\G}{\mathbf{G}}
\newcommand{\Ric}{\mathrm{Ric}} 
\newcommand{\R}{\mathbf{R}}
\newcommand{\til}[1]{\widetilde{#1}}
\DeclareMathOperator{\Sym}{Sym}
\DeclareMathOperator{\Rc}{Rc}
\DeclareMathOperator{\Id}{Id}
\DeclareMathOperator{\id}{id}
\DeclareMathOperator{\End}{End}
\newcommand{\hook}{\mathbin{\hbox{\vrule height2.4pt width4.5pt depth-2pt
\vrule height5pt width0.4pt depth-2pt}}}
\newcommand{\dth}{e_{\theta}}
\newcommand{\nc}{\newcommand}\nc{\br}{\overline}
\nc{\FH}{\mathcal H} \nc{\CC}{\mathbb C}\nc{\DD}{\mathcal D}\nc{\BB}{\mathbb B}
\nc{\Cc}{\mathcal
    C}\nc{\JJ}{\mathbb J}\nc{\II}{\mathbb I}\nc{\I}{\mathcal{I}}
\nc{\PP}{\mathbb P}
\nc {\OO}{\mathcal{O}} \nc{\lra}{\longrightarrow}
\nc{\bdot}{\bullet} \nc{\w}{\omega}
\nc{\dd}{\mathcal{D}}\nc{\im}{\mathrm{Im}}\nc{\Nij}{\mathrm{Nij}}\nc{\Jac}{
\mathrm{Jac}}
 \nc{\sgn}{\mathrm{sgn}} 
\newcommand{\Hom}{\mathrm{Hom}} 
\newcommand{\so}{\mathfrak{so}}\nc{\MM}{\mathcal{M}}\nc{\Har}{\mathcal{H}}
\nc{\zed}{\mathcal{Z}}
\nc{\bb}[1]{\mathbb{#1}} \nc{\image}{\mathrm{Im}\ } 
 \nc{\ba}{\overline}
\nc{\AAA}{\mathcal{A}} \nc{\de}{\delta}\nc{\debar}{\overline{\delta}}
\nc{\GG}{\mathbb{G}} \nc{\Ja}{e^{\tfrac{\pi}{2}\JJ_1}}
\nc{\Jb}{e^{\tfrac{\pi}{2}\JJ_2}}
\nc{\eps}{\epsilon}\nc{\Dir}{\mathrm{Dir}}\nc{\SO}{\mathrm{SO}}
\nc{\IPS}[1]{#1}\nc{\veps}{\varepsilon}\nc{\Cour}[1]{[#1]}
\nc{\ad}{\mathrm{ad}} \nc{\UU}{\mathcal{U}} \nc{\Aa}{\mathcal
    A}\nc{\Bb}{\mathcal{B}}\nc{\Pic}{\mathrm{Pic}}\nc{\type}{\mathrm{type}}
\nc{\TT}{\mathbb{T}}\nc{\T}{\mathcal{T}} \nc{\cl}{\mathrm{cl}}\nc{\isom}{\cong}
\nc{\Z}{\mathcal{Z}}\nc{\Tot}{\mathrm{Tot}}\nc{\Dol}{\mathrm{Dol}}\nc{\hol}{
\mathrm{hol}}
\nc{\EE}{\mathbb{E}}\nc{\e}{\mathbf{e}} \nc{\E}{\mathbf{E}} 
\nc{\Gg}{\mathfrak{g}}
\nc{\Der}{\mathrm{Der}}\nc{\ol}{\overline}
\nc{\RIC}{\mathbf{Ric}}
\nc{\B}{\mathbf{B}}
\nc{\V}{\mathbf{V}}
\nc{\J}{\mathbf{J}}
\nc{\Q}{\mathbf{Q}}
\nc{\A}{\mathbf{A}}
\nc{\W}{\mathbf{W}}
\begin{document}

\title[Generalized geometry, T-duality, and renormalization group
flow]{Generalized geometry, T-duality, and renormalization group flow}

\date{October 18, 2013}

\author{Jeffrey Streets}
\address{Rowland Hall\\
         University of California, Irvine\\
         Irvine, CA 92617}
\email{\href{mailto:jstreets@uci.edu}{jstreets@uci.edu}}

\begin{abstract} We interpret the physical $B$-field renormalization group flow
in the language of Courant
algebroids, clarifying the sense in which this flow is the natural ``Ricci
flow'' for generalized geometry.  Next we show that the $B$-field
renormalization group flow preserves
T-duality in a natural sense.  As corollaries we obtain new long time existence
results for the $B$-field renormalization group flow.
\end{abstract}

\thanks{The author gratefully acknowledges support from the National Science
Foundation and the Sloan Foundation.}

\maketitle

\section{Introduction}

Let $(M^n, g)$ be a Riemannian manifold and let $H_0 \in \Lambda^3(T^* M)$,
$dH_0 =
0$.  Given this setup and $b \in \Lambda^2(M)$ we set $H = H_0 + db$.  The
\emph{B-field renormalization group flow} is the system of equations
\begin{gather} \label{RGflow}
\begin{split}
\dt g_{ij} =&\ - 2 \Rc_{ij} + \frac{1}{2} H_{i p q} H_{j}^{\ pq},\\
\dt b =&\ - d^*_g H.
\end{split}
\end{gather}
The physical interpretation of $H$ is, in analogy with Yang-Mills
theory, as a generalized magnetic field strength. With background fields $g$ and
$H$ one
can define an energy for string worldsheets in this target geometry, and
equation (\ref{RGflow}) arises by imposing cutoff independence for the
associated quantum field theory at one-loop.  These ideas began in the work of
Friedan et. al. (\cite{Friedan2}, \cite{Friedan}, \cite{Friedan3}, \cite{FMS}). 
For the sequel we require a gauge-fixed version of this flow.  In particular,
given the above setup and a one-parameter family of functions $f_t$, consider
\begin{gather} \label{GFRGflow}
\begin{split}
\dt g_{ij} =&\ - 2 \Rc_{ij} + \frac{1}{2} H_{i p q} H_{j}^{\ pq} + (L_{\N f}
g)_{ij},\\
\dt b =&\ - d^*_g H + i_{\N f} \hook H.
\end{split}
\end{gather}

The first purpose of this paper is to give equation (\ref{RGflow}) a natural
interpretation in terms of generalized geometry.  This subject was initiated in
the work of Hitchin \cite{Hitchin}, 
and later developed in the thesis of Gualtieri \cite{GualtThesis}. 
Partly inspired by
physical ideas, generalized geometry treats not just the tangent bundle,
 but a twisted Courant algebroid $\E$ modeled on $T \oplus T^*$, as the
fundamental object associated to a smooth manifold.  With this philosophy 
one is lead to the definition of a generalized metric $\G$ (cf. \S
\ref{metric}), which naturally incorporates a standard Riemannian metric $g$ and
a
 two-form $b$.  Likewise, associated to $\G$ there are two canonical connections
on $\E$ referred to as Bismut connections.  These connections in 
turn have natural ``Ricci tensors" $\R$ interpreted as elements of $\so(E)$ (see
\S \ref{Bismut} for the precise definitions).  In analogy with the usual Ricci
flow equation, one is lead (\cite{GualtPr}) to define the \emph{generalized
Ricci flow} equation
\begin{gather} \label{GRflow}
\dt \G = -2 [\R, \G].
\end{gather}
\noindent Moreover, one can hope that this geometrically motivated construction involving data $(g,b)$
leads to
the same flow equations (\ref{RGflow}) derived from physical considerations. 
Gualtieri has shown that this is the case, and we present a proof of this fact
here.

\begin{thm} \emph{(\cite{GualtPr})} \label{GflowBflowequiv} Let $(E, \pi,
[\cdot,
\cdot])$ be an exact
real Courant algebroid over a smooth manifold $M$.  A one-parameter 
family of generalized metrics $\G_t$ is a solution of generalized Ricci flow if
and only if the one parameter family of induced pairs $(g_t,b_t)$ of metrics and
torsion potentials solve the $B$-field renormalization group flow.
\end{thm}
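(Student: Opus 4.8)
The plan is to prove the equivalence by direct computation, translating the abstract generalized Ricci flow equation into the concrete language of the pair $(g,b)$. The first step is to recall the explicit parametrization of an exact Courant algebroid: after choosing an isotropic splitting, $E \cong T \oplus T^*$, and a generalized metric $\G$ is equivalent to the data of a Riemannian metric $g$ together with a two-form $b$, with $\G$ expressible in block form in terms of $g$ and $b$. Because the question is about the time derivative $\dt \G$, I would first compute how an infinitesimal variation of the pair $(g,b)$ manifests as a variation of $\G$, obtaining $\dt \G$ as a linear expression in $\dt g$ and $\dt b$. This is purely algebraic and reduces to differentiating the block matrix.

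Next I would compute the right-hand side $-2[\R,\G]$ explicitly. This requires unwinding the definition of the Bismut Ricci tensor $\R \in \so(E)$ associated to $\G$ (from the section \S \ref{Bismut} whose formulas I may assume), and then computing the commutator $[\R,\G]$ as an endomorphism of $E$. The key structural fact to exploit is that $\G$ is an involution (its $\pm 1$ eigenspaces give the splitting $E = V_+ \oplus V_-$), so that $[\R,\G]$ automatically anti-commutes with $\G$ and hence lives in the off-diagonal part with respect to the eigenspace decomposition; this is precisely the tangent space to the space of generalized metrics at $\G$, which guarantees the flow preserves the generalized metric condition and matches the shape of the $\dt \G$ computed in the first step.

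With both sides expressed as endomorphisms built from $g$, $b$, and the curvature/torsion data, the final step is to match components. I would decompose the equation $\dt \G = -2[\R,\G]$ along the $V_\pm$ blocks and read off the symmetric and antisymmetric parts. The symmetric part should reproduce the metric evolution $\dt g_{ij} = -2\Rc_{ij} + \tfrac{1}{2} H_{ipq}H_j^{\ pq}$, and the antisymmetric part should reproduce $\dt b = -d^*_g H$. Here the crucial identity is that the Bismut Ricci tensor of the connection with torsion $H$ has symmetric part $\Rc_{ij} - \tfrac{1}{4} H_{ipq} H_j^{\ pq}$ and antisymmetric part proportional to $d^*_g H$; substituting this into $-2[\R,\G]$ and comparing blocks yields exactly \eqref{RGflow}.

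The main obstacle I expect is bookkeeping rather than conceptual: correctly tracking how the two-form $b$ enters $\G$ and the Bismut connection, and ensuring that the identification of $\R$'s symmetric and antisymmetric pieces with the Ricci and torsion-divergence terms is done with consistent sign and factor conventions. In particular, the factor of $\tfrac{1}{4}$ versus $\tfrac{1}{2}$ in the quadratic torsion term, and the sign of $d^*_g H$, are sensitive to the choice of which Bismut connection ($\pm$) is used to define $\R$ and to the normalization of $\G$ as an involution versus a metric. I would handle this by fixing conventions at the outset (consistent with \S \ref{metric} and \S \ref{Bismut}) and verifying the factors on the diagonal torsion contraction, since that is where the physical derivation and the generalized-geometric derivation are most likely to appear to disagree before the conventions are reconciled.
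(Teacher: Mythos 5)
Your proposal is correct and follows essentially the same route as the paper: the paper likewise differentiates the block-matrix form of $\G$ to relate variations of $\G$ (as commutators $[\V,\G]$ with off-diagonal Lie algebra elements) to variations $(\dt g, \dt b)$ in both directions (Lemmas \ref{metricvariation} and \ref{reversevariation}), and then matches components using the identity $\Ric(\nabla^\pm) = \Ric(\nabla) - \tfrac{1}{4}H^2 \mp \tfrac{1}{2}d^*H$ of Lemma \ref{generalBismutRicci}, exactly the symmetric/antisymmetric splitting you identify as the crux. Your attention to the choice of Bismut connection is warranted: the paper's Definition \ref{genric} uses $\nabla^-$, which is what produces the sign $\dt b = -d^*_g H$.
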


The second main purpose of this paper is to exhibit the relationship between
equation (\ref{GFRGflow}) and T-duality.  T-duality is an equivalence between
different quantum field theories which have very different classical
descriptions.  This phenomena was first discovered in 1987 by
Buscher \cite{Buscher}, \cite{B2}, and was further explored by Ro\u{c}ek and
Verlinde in
\cite{RV}.  More recently Cavalcanti and Gualtieri \cite{CG} gave a
unified
description of the T-duality relationship for all structures in generalized
geometry, and in particular for generalized K\"ahler structure.  Due to the
role the renormalization group flow (\ref{RGflow}) plays in the theory of
nonlinear sigma models, physically motivated arguments suggest that T-duality of
pairs of metrics and flux potentials $(g, b)$ should be preserved.  In
particular, Haagensen \cite{Haag} addresses this question using some
explicit coordinate calculations and some physical arguments.  The next
theorem gives a completely rigorous treatment of this idea from a purely
geometric point of view, which moreover makes clear the change in dilaton which
occurs.

\begin{thm} \label{Tdualflow} Suppose $(M^n, H, \theta)$ and $(\bar{M}, \bar{H},
\bar{\theta})$ are topologically T-dual circle bundles (cf.
Definition \ref{toptdual}).  Given $(g, b)$ an $S^1$-invariant pair of metric and two-form, and $f_t$ a one-parameter family of
$S^1$-invariant functions, let $(g_t, b_t)$ be the unique
solution to (\ref{GFRGflow}) with this initial condition.  Let $(\bar{g}_t,
\bar{b}_t)$ 
denote the one-parameter family of T-dual pairs to $(g_t, b_t)$.  Then
$(\bar{g}_t, \bar{b}_t)$ is the unique solution to (\ref{GFRGflow}) with initial
condition $(\bar{g}, \bar{b})$ with $\bar{f}_t = f_t + \log \phi_t$,
where $\phi_t =
g_t(\dth, \dth)$ is the function determining the length of the circle fiber on
$M$ at each time $t$.
\end{thm}

\begin{rmk} This theorem holds for arbitrary T-dual torus bundles by taking
repeated application of $S^1$ dualities, so for
simplicity we give the proof in the case of circle bundles.
\end{rmk}

While Theorem \ref{Tdualflow} completely captures the relationship of
$T$-duality to the renormalization group flow of general pairs $(g, b)$, a
number of questions still remain.  For instance, through the work of the author
and Tian (\cite{ST1}, \cite{ST2}, \cite{ST3}) it was discovered
that, after coupling to appropriate evolution equations for the complex
structures, equation (\ref{RGflow}) preserves generalized K\"ahler geometry. 
One can ask whether the $T$-duality relationship for these complex structures is
preserved, which certainly seems likely.  Moreover, we remark here that Theorem
\ref{Tdualflow} may play a role in the
singularity analysis of equation (\ref{RGflow}).  For instance, rescaling limits
of solutions to (\ref{RGflow}) may either converge or collapse depending on an
appropriate injectivity radius estimate.  In the collapsing case these solutions
inherit the geometry of an invariant metric on a principal torus bundle.  Thus
Theorem \ref{Tdualflow} provides a ``dual'' model for such singularities.

Here is an outline of the rest of the paper.  In \S \ref{backgrnd} we recall
some background on the fundamental constructions of generalized geometry.  
In \S \ref{GRFsection} we compute variational equations for generalized metrics
and prove Theorem \ref{GflowBflowequiv}.  Next in \S \ref{Tdualtop} 
we recall some results related to topological T-duality and in \S
\ref{Tdualgeom} we recall T-duality transformations of geometric structures. 
Lastly in 
\S \ref{Tdualflowsec} we prove Theorem \ref{Tdualflow} and give a number of
examples illustrating the theorem.

\textbf{Acknowledgements:} The author thanks Mark
Stern for introducing him to equation (\ref{RGflow}) and for interesting and
helpful conversations on $T$-duality.  Furthermore, the author thanks Marco
Gualtieri, who played a significant role in the development of this work by
informing the author of Theorem 
\ref{GflowBflowequiv}, and answering many questions related to T-duality.

\section{Background on generalized geometry} \label{backgrnd} 
\subsection{Courant algebroids} \label{courant}

Let $\E$ be an exact Courant algebroid, with extension
\[
0\lra T^* \overset{\pi^*}{\longrightarrow}  \E \overset{\pi}{\lra} T \lra 0
\]
and with neutral metric $\IP{\cdot,\cdot}$.  Throughout, we identify $\E$ with
$\E^*$, using this
metric. The Courant bracket is $\CB{\cdot,\cdot}$, and upon choosing an
isotropic splitting
$s:T\to\E$ of $\pi$, we obtain a closed 3-form
\begin{equation}\label{3form}
    H_s(X,Y,Z) =\IP{\CB{sX,sY},sZ}.
\end{equation}
Any 2-form $b\in\Omega^2(M,\mathbb R)$ defines a Lie algebra element
$b^\pi:\E\to\E$
via
\begin{equation}\label{bfield}
    b^\pi = \pi^*b \pi. 
\end{equation}
Exponentiating, we obtain the orthogonal map
\[
e^{b} = \id_\E + b^\pi \in SO(\E),
\]
which satisfies $\pi e^b = \pi$.  Therefore, given a splitting $s$ of $\pi$, it
follows that $e^b s$
is a new splitting, and then one computes that
\begin{align} \label{Hchange}
H_{e^b s} = H_s + db.
\end{align}

\subsection{Generalized metrics} \label{metric}

\begin{defn} Given $\E$ an exact Courant algebroid, a \emph{generalized metric}
is an endomorphism $\G : E \to E$ satisfying
\begin{enumerate}
 \item $\G^2 = \Id$
 \item $\G^* = \G$
 \item $\IP{\G \cdot, \cdot}$ is positive definite.
\end{enumerate}
\end{defn}

This definition can also be expressed in terms of subbundles of $\E$.  In
particular, note that 
the choice of a maximal positive-definite subbundle $V_+\subset \E$ defines a
reduction in structure group from $O(n,n)$ to $O(n)\times O(n)$. The
orthogonal complement
$V_-\subset \E$ of the bundle $V_+$ is then negative-definite, and we obtain a
direct sum decomposition
\[
\E = V_+\oplus V_-.
\]
From this point of view, if we let $P_\pm$ denote the neutral orthogonal
projections to $V_\pm$,
we recover the metric by
\[
\G = P_+ - P_-.
\]
Conversely, given $\G$ a generalized metric, we obtain projection operators
\[
P_\pm = \tfrac{1}{2}(\Id \pm \G).
\]

A generalized metric induces various classical objects.  First, we obtain a
usual Riemannian metric, thought of as a map $g^{-1} :T^* \to T$, via
\[
g^{-1} = \pi \G \pi^*.
\]
It also defines a splitting $s_{\G}:T\to\E$ of the Courant algebroid, given
by
\begin{equation}\label{splitg}
    s_\G = \G\pi^*g.
\end{equation}
Notice that $\pi s_\G = \pi \G \pi^* g = \id_T$.  With a choice of background
splitting $s_0$, this defines a torsion potential $b$ via
\begin{align*}
 s_{\G} = e^b s_0
\end{align*}
This in turn defines a closed three-form via (\ref{Hchange}), denoted $H_G$.

Conversely, we can use a metric $g$ and an isotropic splitting $s$ to induce a
generalized metric on $\E$.  Specifically, observe the consequences of the above
equations,
\begin{align} \label{gstog}
 \G \pi^* = s_{\G} g^{-1}, \qquad \G s_{\G} = \pi^* g.
\end{align}
Given $s$ and $g$, these equations can be taken as the definition of the
endomorphism $\G$ on the image of $\pi^*$ and $s$, which suffices to define
$\G$.

\subsection{Lie algebra}\label{lieal}

Consider the decomposition $\E = V_+\oplus V_-$ defined
by $\G$ as above.  We can decompose $R \in \so(\E)$ as
\[
R = (P_++P_-) R (P_+ + P_-) = R_+ + R_- + S_+ + S_-,
\]
where $R_\pm=P_\pm R P_\pm\in\so(V_\pm)$  and $S_\pm=P_\mp R P_\pm : V_\pm\to
V_\mp$, and the latter are equivalent data via
\begin{equation*}
S_\pm^* = (P_\mp R P_\pm)^* = - P_\pm R P_\mp = - S_\mp,
\end{equation*}
where we implicitly identify $V_+ = V_+^*$ and $V_- = V_-^*$ using the neutral
metric.  Since all
generalized metrics are related by a neutral orthogonal transformation, it
is natural to interpret an infinitesimal change in $\G$ is given by an element
of the form
$S_+$.  

Now fix $h \in \Sym^2 T^*$ and $k \in \Lambda^2 T^*$.  We interpret $h+k$ as a
map $T\to T^*$, and then we can define a
transformation $\eta = \pi^*(h+k)\pi:\E\to\E$.  Observe that $\eta \notin
\so(\E)$,
because
$\eta^*=\pi^*(h-k)\pi$ does not coincide with $-\eta = \pi^*(-h-k)\pi$ unless
$h=0$.  Nevertheless, $\eta$ does
determine a Lie algebra element by setting 
\[
S_+ = P_-\eta P_+.
\]
This forces $S_- = -S_+^* = - P_+\eta^* P_-$, and we obtain a Lie algebra
element 
\[
R = P_-\eta P_+ - P_+\eta^* P_-.
\]
This discussion is summarized in the following proposition.
\begin{prop}\label{infsym}
Let $h\in \Sym^2T^*, k\in \wedge^2 T^*$.  These data determine a Lie algebra
element
\begin{equation}\label{lieform}
R_{h,k} = P_- \eta P_+ - P_+\eta^* P_- \in \so(\E),
\end{equation}
where $\eta = \pi^*(h+k)\pi$.
\end{prop}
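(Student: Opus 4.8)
The only content of Proposition \ref{infsym} is that the explicitly constructed endomorphism $R_{h,k}$ actually lies in $\so(\E)$; that is, that it is skew-adjoint with respect to the neutral metric $\IP{\cdot,\cdot}$. The plan is therefore to compute the adjoint $R_{h,k}^*$ directly and verify that it equals $-R_{h,k}$. Two elementary facts are all that is needed. First, since $P_\pm$ are the neutral orthogonal projections onto $V_\pm$, they are self-adjoint, $P_\pm^* = P_\pm$, exactly as was used in \S\ref{lieal} to derive $S_\pm^* = -S_\mp$. Second, applying the adjoint twice returns the original operator, so $(\eta^*)^* = \eta$, regardless of whether $\eta$ is itself symmetric.

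With these in hand the verification is a short manipulation. Distributing the adjoint over each composition and reversing the order of factors gives
\[
R_{h,k}^* = (P_-\eta P_+)^* - (P_+\eta^* P_-)^* = P_+ \eta^* P_- - P_- \eta P_+ = -R_{h,k},
\]
where the middle equality uses $P_\pm^* = P_\pm$ together with $(\eta^*)^* = \eta$. Hence $R_{h,k}^* = -R_{h,k}$, so $R_{h,k} \in \so(\E)$ as claimed.

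There is no genuine obstacle here; the single point requiring care is that $\eta = \pi^*(h+k)\pi$ is itself not skew-adjoint. Indeed $\eta^* = \pi^*(h-k)\pi \neq -\eta$ unless $h = 0$, as the surrounding discussion emphasizes, so one must resist the temptation to replace $\eta^*$ by $-\eta$ in the computation. The required antisymmetry is instead supplied entirely by the off-diagonal sandwiching $P_-(\cdot)P_+ - P_+(\cdot)^* P_-$, which forces skew-adjointness for \emph{any} input operator whatsoever. Finally, it is worth observing that the construction depends only on the value $S_+ = P_-\eta P_+$, with the complementary block determined by $S_- = -S_+^*$; this is consistent with the earlier remark that $S_+$ and $S_-$ encode equivalent data, and it confirms that $R_{h,k}$ is the unique element of $\so(\E)$ with off-diagonal part prescribed by $\eta$.
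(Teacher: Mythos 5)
Your proof is correct and takes essentially the same approach as the paper: the paper packages the verification into the block decomposition $R = R_+ + R_- + S_+ + S_-$ of an element of $\so(\E)$, where the off-diagonal blocks satisfy $S_\pm^* = -S_\mp$, and then defines $R_{h,k}$ by prescribing $S_+ = P_-\eta P_+$ and forcing $S_- = -S_+^*$. Your direct computation of $R_{h,k}^* = -R_{h,k}$, resting on $P_\pm^* = P_\pm$ and $(\eta^*)^* = \eta$, is exactly the same algebra, and you correctly flag the one pitfall (that $\eta^* \neq -\eta$ unless $h=0$) that the paper also emphasizes.
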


\subsection{Bismut Connections}\label{Bismut}
A generalized metric naturally determines two connections on $T$, called
\emph{Bismut connections}.  In particular, let $s_{\pm} = (\G \pm \Id) \pi^* g$, let $X^{\pm} := s_{\pm} X$, 
and consider
\begin{gather} \label{Bismutconn}
\nabla^\pm_X Y = \pi P_\pm\CB{X^\mp, Y^\pm}.
\end{gather}
These connections have torsion $T^\pm$ such that
\[
g(T^\pm(X,Y),Z) = \pm H(X,Y,Z),
\]
and if $\N$ denotes the Levi-Civita connection, we have
\[
\nabla^\pm = \N \pm \tfrac{1}{2} g^{-1}H,
\]
where $g^{-1}H$ denotes the composition
\begin{align*}
T \overset{H}{\lra} \Hom(T, T^*) \overset{g^{-1}}{\lra} \End(T).
\end{align*}
Since the connections $\N^{\pm}$ have torsion, the Ricci tensor is no longer
symmetric.  A direct calculation yields
\begin{lemma} \label{generalBismutRicci} Let $(M^n, g)$ be a Riemannian manifold
and let $\N^{\pm} = \N \pm \tfrac{1}{2} g^{-1} H$ as above.  Then
\begin{align} \label{Bismutricci}
\Ric(\nabla^\pm) = \Ric(\nabla) - \tfrac{1}{4} H^2 \mp \tfrac{1}{2}d^*H.
\end{align}
where
\begin{align*}
H^2(X, Y) =&\ \left< i_X H, i_Y H \right>.
\end{align*}
\end{lemma}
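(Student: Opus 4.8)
The plan is to treat $\N^{\pm}$ as a perturbation of the Levi-Civita connection, writing $\N^{\pm} = \N + A^{\pm}$ where $A^{\pm} := \pm \tfrac{1}{2} g^{-1} H$ is the difference tensor, so that $A^{\pm}_X = \pm\tfrac12 (g^{-1}H)(X) \in \End(T)$. The structural fact driving the whole computation is that $A^{\pm}$ is \emph{totally skew-symmetric}: lowering an index gives $\IP{A^{\pm}_X Y, Z} = \pm \tfrac{1}{2} H(X,Y,Z)$, which is antisymmetric in all three arguments. First I would record the standard formula comparing the Riemann curvature endomorphisms $R^{\pm}$ of $\N^{\pm}$ and $R$ of $\N$. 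Since the base connection $\N$ is torsion-free, the torsion correction drops out and a direct expansion of $\N^{\pm}_X \N^{\pm}_Y Z - \N^{\pm}_Y \N^{\pm}_X Z - \N^{\pm}_{[X,Y]} Z$, using $\N_X Y - \N_Y X - [X,Y] = 0$, yields
\begin{align*}
R^{\pm}(X,Y)Z = R(X,Y)Z + (\N_X A^{\pm})_Y Z - (\N_Y A^{\pm})_X Z + A^{\pm}_X A^{\pm}_Y Z - A^{\pm}_Y A^{\pm}_X Z.
\end{align*}

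Next I would contract the first and last slots in an orthonormal frame $\{e_i\}$ to pass to the Ricci tensor, $\Ric(\N^{\pm})(Y,Z) = \sum_i \IP{R^{\pm}(e_i, Y) Z, e_i}$, producing four correction terms beyond $\Ric(\N)$, each of which I would evaluate using the total antisymmetry of $A^{\pm}$. Of the two terms linear in $A^{\pm}$: the trace of $(\N_Y A^{\pm})_{e_i} Z$ vanishes, since it is proportional to $\sum_i (\N_Y H)(e_i, Z, e_i)$ and $H$ is antisymmetric in its first and third arguments; while the trace of $(\N_{e_i} A^{\pm})_Y Z$ equals $\pm\tfrac{1}{2}\sum_i (\N_{e_i} H)(Y, Z, e_i) = \pm\tfrac12 \sum_i (\N_{e_i}H)(e_i,Y,Z)$ after a cyclic permutation, which is exactly $\mp \tfrac{1}{2}(d^* H)(Y,Z)$ by the codifferential convention $(d^*H)(Y,Z) = -\sum_i(\N_{e_i}H)(e_i,Y,Z)$. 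This produces the $\mp\tfrac12 d^* H$ term.

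For the two quadratic terms, the contraction of $A^{\pm}_{e_i} A^{\pm}_Y Z$ vanishes identically, since the repeated frame index occupies two slots of a single factor $H(e_i, \cdot, e_i)$. The remaining term $-\sum_i \IP{A^{\pm}_Y A^{\pm}_{e_i} Z, e_i}$ is therefore the sole source of the quadratic contribution, and a short index computation (again using the cyclic symmetry of $H$) identifies it with $-\tfrac{1}{4} H^2(Y,Z)$, where $H^2(Y,Z) = \IP{i_Y H, i_Z H}$; notably the sign is fixed and the $\pm$ ambiguity cancels between the two factors of $A^{\pm}$. Assembling the four pieces gives the claimed identity. I expect the main obstacle to be purely bookkeeping: keeping the codifferential convention consistent with the definition of $H^2$, and correctly tracking the cyclic reorderings of $H$ needed to recognize each contracted expression as a divergence or a norm rather than a stray cross term.
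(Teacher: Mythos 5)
Your proof is correct. There is nothing in the paper to compare it against line by line: the paper states this lemma as following from ``a direct calculation'' and omits the calculation entirely, so your write-up supplies exactly the argument the paper leaves out, organized in the standard way --- write $\N^{\pm} = \N + A^{\pm}$ with $A^{\pm}$ totally skew, apply the torsion-free curvature comparison formula, and trace. All four correction terms are handled correctly: $\sum_i (\N_Y H)(e_i, Z, e_i)$ vanishes by skew-symmetry of $H$ in its first and third slots; $\pm\tfrac{1}{2}\sum_i (\N_{e_i}H)(e_i,Y,Z)$ yields $\mp\tfrac{1}{2}d^*H$ under the convention $d^* = -\sum_i i_{e_i}\N_{e_i}$; each summand $\IP{A^{\pm}_{e_i}A^{\pm}_Y Z, e_i} = \pm\tfrac{1}{2}H(e_i, A^{\pm}_Y Z, e_i)$ vanishes identically; and the surviving quadratic term contracts to $-\tfrac{1}{4}\sum_{p,q}H(Y,e_p,e_q)H(Z,e_p,e_q)$, with the sign ambiguity cancelling as you note. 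The one point worth making explicit is the normalization you yourself flagged: this last expression equals $-\tfrac{1}{4}H^2$ only under the full-contraction convention $\IP{i_Y H, i_Z H} = \sum_{p,q}H(Y,e_p,e_q)H(Z,e_p,e_q)$, with no factor of $\tfrac{1}{2}$; this is indeed the paper's convention, as one sees from Lemma \ref{Hsqcalc}, where $\HH_{ij} = H_{i p q}H_{j}^{\ pq}$, and from the coefficient $\tfrac{1}{2}H_{ipq}H_j^{\ pq}$ in (\ref{RGflow}), which is consistent with $-2\Rc(\N^-)$ precisely when $H^2$ means the full contraction. With that convention fixed, your argument is complete.
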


We can use the symmetric and skew symmetric pieces of this Ricci tensor to
define a Lie algebra element in accordance with Proposition \ref{infsym}.

\begin{defn} \label{genric} Given $\E$ an exact Courant algebroid and $\G$ a
generalized metric, the \emph{generalized Ricci tensor}, $\R$, is the Lie
algebra element associated to $\Rc(\N^-)$ via Proposition \ref{infsym}. 
\end{defn}

\section{Generalized Ricci flow} \label{GRFsection}
\subsection{Variational Formulas}

In this subsection we compute variation formulas for one-parameter families of
generalized metrics and the various associated data.

\begin{defn} Let $(\E, \pi, q, [\cdot, \cdot])$ be an
exact real Courant algebroid over a smooth manifold $M$.  Given $h + k
\in T^* \otimes T^*$, let $\V$ denote the Lie algebra element associated to
$h+k$ via Proposition \ref{infsym}.
We say a one-parameter family of sections $\A_t \in \End(\E)$ has
\emph{variation $\V$} if
\begin{align*}
\left.\dt \A \right|_{t = 0} = [\V,\A].
\end{align*}
\end{defn}

\begin{rmk} We observe that, by using the section $s_{\G}$ to provide an
isomorphism $E \cong T \oplus T^*$, $\V$ can be written in matrix form as
\begin{align} \label{Vmatrix}
\V = \tfrac{1}{2}\begin{pmatrix} -g^{-1}h &
-g^{-1}kg^{-1}\\k&hg^{-1}\end{pmatrix}.
\end{align}
This form of $\V$ will make the calculations to follow more transparent.
\end{rmk}

\begin{lemma} \label{metricvariation} Let $(\E, \pi, q, [\cdot, \cdot])$ be an
exact real Courant algebroid over a smooth manifold $M$.  Suppose $\G_t$ is a
one-parameter family of generalized metrics with variation $\V$ and $\G_0 = \G$.
Let $g_t, s_{\G_t}$ denote the associated Riemannian metrics and splittings as
above.  Moreover, assume a background section $s_0$ and define a family of torsion
potentials $b_t$ via $s_{\G_t} = e^{b_t} s_0$.  Then
\begin{align*}
\left. \tfrac{\del}{\del t} g \right|_{t = 0} =&\ h,\\
\left. \tfrac{\del}{\del t} s_{\G} \right|_{t = 0} =&\ \pi^* k\\
\left. \tfrac{\del}{\del t} b \right|_{t = 0} =&\ k,\\
\end{align*}
\begin{proof}
First note that
\begin{align*}
[\V, \G] = \begin{pmatrix}
-g^{-1}k & -g^{-1}hg^{-1}\\
h & kg^{-1}
\end{pmatrix}.
\end{align*}
We derive the evolution equation for $g$.  Differentiating the defining relation
$g^{-1} = \pi \G \pi^*$ yields
\begin{align*}
- g^{-1} \tfrac{\del}{\del t} g g^{-1} =&\ \tfrac{\del}{\del t} g^{-1}\\
=&\ \pi [\V, \G] \pi^*\\
=&\ \pi \begin{pmatrix}
-g^{-1}k & -g^{-1}hg^{-1}\\
h & kg^{-1}
\end{pmatrix} \pi^*\\
=&\ - g^{-1} h g^{-1}.
\end{align*}
The first claim follows.  Next we differentiate the equation $s_{\G} = \G \pi^*
g$ to obtain
\begin{align*}
\tfrac{\del}{\del t} s_{\G} =&\ [\V, \G] \pi^* g + \G \pi^* h = \begin{pmatrix}
-g^{-1}k & -g^{-1}hg^{-1}\\
h & kg^{-1}
\end{pmatrix}
\begin{pmatrix}
0 & 0\\
g & 0
\end{pmatrix}
+ \begin{pmatrix}
0 & g^{-1}\\
g & 0
\end{pmatrix}
\begin{pmatrix}
0 & 0\\
h & 0
\end{pmatrix} = \begin{pmatrix}
0 & 0\\
k & 0
\end{pmatrix}.
\end{align*}
The second claim follows.  Noting the equation $s_{\G} = e^b s_0 = s_0 + \pi^* b
\pi s_0$, one derives $\pi^* \tfrac{\del}{\del t} b = \pi^* k$, and so
$\tfrac{\del}{\del t} b = k$.  The last claim follows.
\end{proof}
\end{lemma}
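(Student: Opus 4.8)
The plan is to verify each of the three variational identities by differentiating the defining relations for the Riemannian metric $g$, the splitting $s_{\G}$, and the torsion potential $b$, using the hypothesis $\left.\dt \G\right|_{t=0} = [\V, \G]$ together with the matrix form (\ref{Vmatrix}) of $\V$. The first step is to compute the matrix $[\V, \G]$ explicitly in the frame induced by $s_{\G}$, in which $\G$ takes the standard form $\begin{pmatrix} 0 & g^{-1}\\ g & 0\end{pmatrix}$; this is a routine bracket computation that reduces everything to block matrix multiplication.

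For the metric, I would differentiate the identity $g^{-1} = \pi \G \pi^*$. Since $\pi$ and $\pi^*$ are fixed maps that do not depend on $t$, the time derivative passes directly onto $\G$, giving $\dt g^{-1} = \pi [\V, \G]\pi^*$. Using $\dt g^{-1} = -g^{-1}(\dt g)g^{-1}$ and reading off the $T^* \to T$ block of $\pi[\V,\G]\pi^*$, I obtain $-g^{-1}(\dt g)g^{-1} = -g^{-1} h g^{-1}$, from which $\dt g = h$ follows by cancellation. For the splitting, I would differentiate $s_{\G} = \G\pi^* g$ via the product rule, which produces two terms: $[\V,\G]\pi^* g$ from the variation of $\G$ and $\G\pi^* h$ from the variation of $g$ (using the first claim). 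Both are then computed as block matrix products and added; the claim is that the symmetric contributions cancel and only the $k$-block survives, yielding $\dt s_{\G} = \pi^* k$.

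Finally, for the torsion potential, I would use the relation $s_{\G} = e^b s_0 = s_0 + \pi^* b\, \pi s_0$, where $s_0$ is the fixed background splitting. Differentiating and noting that $\pi s_0 = \id_T$, I get $\dt s_{\G} = \pi^*(\dt b)$. Comparing with the second claim $\dt s_{\G} = \pi^* k$ and using the injectivity of $\pi^*$, I conclude $\dt b = k$.

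None of these steps presents a genuine conceptual obstacle, since each is a direct differentiation of an algebraic defining relation. The only point requiring care is bookkeeping in the block matrix computations, particularly ensuring that the off-diagonal and diagonal blocks are correctly tracked when forming $[\V, \G]$ and the products $[\V,\G]\pi^* g$ and $\G\pi^* h$; it is here that the cancellations making the final answers clean take place, so the representation (\ref{Vmatrix}) is doing the real work by rendering these products transparent. I would therefore organize the proof around first establishing the $[\V,\G]$ matrix and then handling the three claims in sequence, with each claim feeding into the next.
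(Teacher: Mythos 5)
Your proposal is correct and follows essentially the same route as the paper: compute $[\V,\G]$ in block form using the frame induced by $s_{\G}$, then differentiate the defining relations $g^{-1} = \pi\G\pi^*$, $s_{\G} = \G\pi^* g$, and $s_{\G} = s_0 + \pi^* b\,\pi s_0$ in sequence, with each claim feeding into the next. The only difference is that you leave the block-matrix products for the second claim as an outline where the paper carries them out explicitly, but the computation is routine and your bookkeeping plan is sound.
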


Having determined how a variation in generalized metric induces variations of
other relevant quantities we now go backwards and determine the evolution
equation for $\G$ induced by an evolution equation for $g$ and $s$.

\begin{defn} Let $(E, \pi, [\cdot, \cdot])$ be an exact real Courant algebroid
over $M$.  We say that a one parameter family $(g_t, s_t)$ of metrics on $M$ and
isotropic splittings of $E$ has \emph{variation $(h, k)$} if
\begin{align*}
\left. \dt g \right|_{t = 0} =&\ h.\\
\left. \dt s \right|_{t = 0} =&\ \pi^* k,
\end{align*}
where $h \in \Sym^2(T^*M)$ and $k \in \Lambda^2 T^*$.
\end{defn}

\begin{lemma} \label{reversevariation} Let $(E, \pi, [\cdot, \cdot])$ be an
exact real Courant algebroid over $M$.  Fix $(g_t, s_t)$ a one-parameter
family of metrics on $M$ and splittings with variation $(h, k)$, and let $\G_t$
denote the one-parameter family of generalized metrics associated to this data
via (\ref{gstog}).  Then
\begin{align*}
\left. \dt \G \right|_{t = 0} =&\ [\V, \G]
\end{align*}
where $V$ is the Lie algebra element associated to $h + k$ as in
(\ref{Vmatrix}).
\begin{proof} Express the splitting $s_t = e^{b_t} s_0 = e^{b_t} s_{\G_0}$, and
then $b_0 = 0$.  Then for general $t$ we have the equation
\begin{align*}
\G_{g, b} =&\ \begin{pmatrix} - g^{-1} b & g^{-1} \\ g - b g^{-1} b & b g^{-1}
\end{pmatrix}.
\end{align*}
We can directly differentiate, using $b_0 = 0$ to yield the result.
\end{proof}
\end{lemma}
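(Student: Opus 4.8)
The plan is to carry out a direct computation in a well-chosen frame and recognize the result as the commutator $[\V, \G]$ already evaluated in the proof of Lemma \ref{metricvariation}. The point is that this lemma is exactly the converse of Lemma \ref{metricvariation}, so the two calculations should produce matching matrices provided I arrange the frames to agree at $t = 0$.

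First I would normalize the background splitting by taking $s_0 = s_{\G_0}$, so that the induced torsion potential satisfies $b_0 = 0$. Using the resulting identification $E \cong T \oplus T^*$, I would derive the matrix expression for the generalized metric in terms of $g$ and $b$. Concretely, in this frame $\pi$ is the projection onto the first factor and $\pi^*$ the inclusion of the second, while $s_{\G} = e^b s_0$ sends $X \mapsto (X, bX)$; substituting these into the two relations $\G\pi^* = s_{\G} g^{-1}$ and $\G s_{\G} = \pi^* g$ from (\ref{gstog}) determines all four blocks of $\G$ and yields
\begin{align*}
\G_{g,b} = \begin{pmatrix} -g^{-1}b & g^{-1} \\ g - bg^{-1}b & bg^{-1} \end{pmatrix}.
\end{align*}

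Next I would identify the first-order variation of $b$. Since $s_t = e^{b_t} s_0 = s_0 + \pi^* b_t$, differentiating gives $\dt s|_{t=0} = \pi^* \dt b|_{t=0}$; comparing with the hypothesis $\dt s|_{t=0} = \pi^* k$ and using injectivity of $\pi^*$ forces $\dt b|_{t=0} = k$, exactly as in Lemma \ref{metricvariation}. With $\dt g|_{t=0} = h$ and $\dt b|_{t=0} = k$ in hand, I would differentiate the matrix $\G_{g,b}$ blockwise at $t = 0$. The normalization $b_0 = 0$ kills every term in which an undifferentiated $b$ survives, so the four blocks collapse to $-g^{-1}k$, $-g^{-1}hg^{-1}$, $h$, and $kg^{-1}$ respectively. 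This is precisely the matrix computed for $[\V, \G]$ in the proof of Lemma \ref{metricvariation}, which completes the argument.

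The only point requiring care — and the main obstacle — is the consistency of frames between the two lemmas: the matrix (\ref{Vmatrix}) for $\V$ and the commutator $[\V, \G]$ were written using the adapted splitting $s_{\G}$, whereas $\G_{g,b}$ above is expressed in the fixed background frame $s_0$. These frames do not coincide for general $t$, but the choice $s_0 = s_{\G_0}$ makes them agree precisely at $t = 0$, which is the only time at which we evaluate. This is exactly why setting $b_0 = 0$ is essential: it both simplifies the differentiation and legitimizes comparing the resulting matrix, in a common frame, with $[\V, \G]$.
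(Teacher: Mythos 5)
Your proposal is correct and follows essentially the same route as the paper: normalize $s_0 = s_{\G_0}$ so that $b_0 = 0$, write $\G_{g,b}$ in matrix form, and differentiate blockwise at $t=0$, matching the result against the matrix for $[\V,\G]$ computed in Lemma \ref{metricvariation}. Your additional remarks — deriving the matrix of $\G_{g,b}$ from (\ref{gstog}), extracting $\left.\dt b\right|_{t=0} = k$ from the hypothesis on $s_t$, and the observation that the choice $s_0 = s_{\G_0}$ makes the background frame agree with the adapted frame at $t=0$ — simply make explicit details that the paper's terse proof leaves implicit.
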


\subsection{Generalized Ricci flow}

\begin{defn} Let $(E, \pi, [\cdot, \cdot])$ be an exact real Courant algebroid
over a smooth manifold $M$.
 We say that a one-parameter family of generalized metrics $\G_t$ is a solution
of \emph{generalized Ricci flow} if
\begin{align} \label{GRF}
\dt \G = [-2\R, \G],
\end{align}
where $\R$ is the generalized Ricci curvature of $\G$ as in Definition
\ref{genric}
\end{defn}

\begin{proof}[Proof of Theorem \ref{GflowBflowequiv}] It follows directly from
Lemma  \ref{metricvariation} that the
evolution equations for the induced pair $(g_t,b_t)$ are precisely those of
(\ref{RGflow}),
as required.  Conversely, suppose $(g_t, b_t)$ are a solution to (\ref{RGflow}).
 It follows from Lemma \ref{reversevariation} that the associated generalized
metrics $\G_t$ evolve by
\begin{align*}
\dt \G = \CB{\V, \G},
\end{align*}
where $\V$ is the Lie algebra element associated to $\dt(g + b)$.  Comparing
with Lemma \ref{generalBismutRicci} again we see that $\V_t = -2 \R_t$ as
defined above, and so the theorem follows.
\end{proof}

\section{Topological T-duality} \label{Tdualtop}

In this section we recall some background on the topological aspect of
T-duality.  Our discussion here follows closely the work of Cavalcanti-Gualtieri
\cite{CG}.

\begin{defn} Let $M$, $\bar{M}$ be principal $T^k$ bundles over a common base
manifold $B$, and let $H \in \Omega^3_{T^k}(M)$ and 
$\bar{H} \in \Omega^3_{T^k}(\bar{M})$ be invariant closed forms, and finally let
$\theta$ and $\bar{\theta}$ denote connection $1$-forms
 on $M$ and $\bar{M}$.  Consider $M \times_B \bar{M}$ the fiber product of $M$
and $\bar{M}$, with projection maps
 $p : M \times_B \bar{M} \to M, \bar{p} : M \times_B \bar{M} \to \bar{M}$.  We
say that $(M,H,\theta)$ and $(\bar{M},\bar{H},\bar{\theta})$ are
\emph{topologically $T$-dual} if
\begin{align} \label{toptdual}
p^* H - \bar{p}^* \bar{H}= d(p^* \theta \wedge \bar{p}^* \bar{\theta}).
\end{align}
\end{defn}

\begin{remark} \label{tdualrmk} While as written this definition requires
specific choices of $H$ and $\bar{H}$, the definition only depends on the
cohomology
 classes $[H]$ and $[\bar{H}]$.  Specifically, if $(M, H,\theta)$ and $(\bar{M},
\bar{H},\bar{\theta})$ are $T$-dual, and we set $H' = H + db$, with $b \in
\Omega^2_{T^k}(M)$,
 there exists a new connection $\theta'$ on $M$ and also $\bar{H}',
\bar{\theta}'$ on $\bar{M}$ such that for the quadruple $(H',\theta',\bar{H}',
\bar{\theta}')$
 the relation (\ref{toptdual}) holds.  In particular, as a corollary of Lemma
\ref{consistencylemma} we may choose any $S^1$-invariant metric $g$ whose
induced connection $1$-form is $\theta$ and then take the T-dual data to $(g,b)$
provides the requisite data.
\end{remark}

\begin{theorem} (\cite{BEM} Theorem 3.1)  If $(M, H)$ and $(\bar{M}, \bar{H})$
are $T$-dual with $p^* H - \bar{p}^* \bar{H} = dF$, then
\begin{gather} \label{taudefn}
\begin{split}
\tau : (\Omega_{T^k}(M), d_H) \to (\Omega_{T^k}(\bar{M}), d_{\bar{H}}), \quad
\tau(\rho) = \int_{T^k} e^F \wedge \rho
\end{split}
\end{gather}
is an isomorphism of differential complexes, where the integration is along the
fibers of $M \times_B \bar{M} \to \bar{M}$.
\end{theorem}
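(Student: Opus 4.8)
The plan is to verify the two properties that together constitute an isomorphism of differential complexes: that $\tau$ intertwines the twisted differentials, i.e.\ $\tau \circ d_H = d_{\bar{H}} \circ \tau$, and that $\tau$ is a bijection. Throughout I write $\bar{p}_*$ for the fiber integration $\int_{T^k}$ along the fibers of $M \times_B \bar{M} \to \bar{M}$ (these fibers being the $T^k$ of $M$), so that $\tau(\rho) = \bar{p}_*(e^F \wedge p^* \rho)$. First one checks that $\tau$ is well-defined into $\Omega_{T^k}(\bar{M})$: since $\theta$, $\bar{\theta}$, and $\rho$ are all invariant, the integrand $e^F \wedge p^*\rho$ is invariant on the correspondence space, and fiber integration of an invariant form is invariant. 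Note also that because $H$ has odd degree the differential $d_H = d + H \wedge$ grades $\Omega_{T^k}$ only modulo $2$, and $\tau$ shifts form-degree by $\pm 1$ (fiber integration lowers degree by $k$, while $e^F$ mixes degrees by even amounts), so the assertion is really that $\tau$ is an isomorphism of complexes graded modulo $2$.

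For the chain map property I would expand $\tau(d_H \rho)$ directly. Using the graded Leibniz rule together with $d(e^F) = dF \wedge e^F$, the differential $d(p^*\rho)$ can be traded for $d(e^F \wedge p^*\rho) - dF \wedge e^F \wedge p^*\rho$. The crucial input is then the topological T-duality relation $dF = p^* H - \bar{p}^* \bar{H}$: substituting it, the $p^*H$ contribution coming from $dF$ cancels exactly against the term $e^F \wedge p^*H \wedge p^*\rho$ produced by the $H\wedge$ in $d_H$, this cancellation being clean precisely because $e^F$ is even and hence commutes with the $3$-form $H$. What survives is $e^F \wedge p^*(d_H\rho) = d(e^F \wedge p^*\rho) + \bar{p}^*\bar{H} \wedge e^F \wedge p^*\rho$. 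Applying $\bar{p}_*$ and using that $d$ commutes with fiber integration over a closed fiber (Stokes with no boundary term) converts the first term into $d\,\tau(\rho)$, while the projection formula $\bar{p}_*(\bar{p}^*\bar{H} \wedge -) = \bar{H} \wedge \bar{p}_*(-)$ converts the second into $\bar{H} \wedge \tau(\rho)$, so that $\tau(d_H\rho) = d_{\bar{H}}\tau(\rho)$ once the global sign is fixed by an orientation of the fibers.

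For bijectivity I would specialize to the circle case treated in this paper and use the connection to decompose an invariant form on $M$ as $\rho = a + \theta \wedge b$ with $a, b$ basic. Since $F = p^*\theta \wedge \bar{p}^*\bar{\theta}$ satisfies $F \wedge F = 0$, one has $e^F = 1 + F$, and integrating out the fiber coordinate (with $\int_{S^1}\theta = 1$) gives $\tau(a + \theta \wedge b) = b \pm \bar{\theta}\wedge a$ on $\bar{M}$. In the analogous decomposition $\bar{\rho} = \bar{a} + \bar{\theta}\wedge\bar{b}$ on $\bar{M}$ this is simply the swap $(a,b) \mapsto (b, \pm a)$ of the basic and fiber components, which is manifestly a bijection. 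Equivalently, and this is the route that generalizes cleanly to higher-rank $T^k$, I would produce the candidate inverse $\bar{\tau}(\bar{\rho}) = p_*(e^{-F} \wedge \bar{p}^*\bar{\rho})$ integrating along the dual fiber, and verify $\bar{\tau}\circ\tau = \pm\Id$ by Fubini over the total $T^{2k}$ fiber of $M \times_B \bar{M}$, where $e^{-F} \wedge e^F = 1$ collapses the iterated integral.

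The main obstacle is entirely one of bookkeeping signs, of which there are three sources: the sign in commuting $d$ past fiber integration (depending on the fiber dimension $k$), the signs in the graded Leibniz rule where $F$ is even but $dF$ is odd, and the signs from sliding the connection forms past factors of varying degree when invoking the projection formula. None is conceptually hard, but they must be pinned down consistently — by fixing orientations and the normalization of $\int_{T^k}$ — so that both the cancellation in the chain map identity and the collapse $\bar{\tau}\circ\tau = \pm\Id$ close exactly. The conceptual content, that the single relation $dF = p^*H - \bar{p}^*\bar{H}$ is what converts $H$-twisting into $\bar{H}$-twisting, is immediate once this is arranged.
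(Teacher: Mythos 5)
This statement is quoted in the paper as a known result (\cite{BEM}, Theorem 3.1) and the paper gives no proof of it, so there is nothing internal to compare against; your proposal must be judged against the standard argument in the cited literature, and it matches that argument in all essentials. The chain-map computation is exactly right: the identity $d(e^F\wedge p^*\rho)=dF\wedge e^F\wedge p^*\rho+e^F\wedge d\,p^*\rho$, the substitution $dF=p^*H-\bar p^*\bar H$, the evenness of $e^F$ making the $p^*H$ cancellation clean, and then Stokes (closed fibers) plus the projection formula to land on $d_{\bar H}\tau(\rho)$ — this is the proof in \cite{BEM} and in Cavalcanti--Gualtieri. Your observations that the complexes are only $\mathbb{Z}_2$-graded and that invariance survives fiber integration are also correct and worth stating. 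The one loose point is the second route to bijectivity: writing $\bar\tau\circ\tau=p_*\bigl(e^{-F}\wedge\bar p^*\bar p_*(e^F\wedge p^*\rho)\bigr)$, you cannot literally ``collapse the iterated integral by Fubini,'' because $\bar p^*\bar p_*\neq\Id$ on forms of the correspondence space; the composition must be evaluated on the invariant decomposition (in the circle case, $\omega=\omega_0+\theta\wedge\omega_1+\bar\theta\wedge\omega_2+\theta\wedge\bar\theta\wedge\omega_3$ with basic coefficients), where one checks it is $\pm\Id$ term by term. Fortunately your primary bijectivity argument — the explicit swap $\tau(a+\theta\wedge b)=b\pm\bar\theta\wedge a$ coming from $e^F=1+F$ — is precisely this rigorous computation, and since the paper explicitly reduces to circle bundles (iterating $S^1$-dualities for general $T^k$), it suffices. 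So: correct in structure and in the key cancellation; just replace the Fubini remark by the decomposition computation, and fix orientation/sign conventions once and for all as you indicate.
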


\begin{remark} The map $\tau$ is a map on the Clifford module of $T^k$-invariant
forms.  To show that it is an isomorphism of Clifford modules 
we require an isomorphism $\phi : (TM \oplus T^*M) / T^k \to (T \bar{M} \oplus
T^*\bar{M}) / T^k$, which we define next.
\end{remark}

\begin{defn} \label{phidef} Given $(X + \xi) \in (TM \oplus T^*M) / T^k$, choose
the unique lift $\hat{X}$ of $X$ to $T(M \times \bar{M})$ such that
\begin{align*}
p^* \xi(Y) - F(\hat{X}, Y) = 0, \quad \mbox{ for all } Y \in \mathfrak t^k_{{M}}
\end{align*}
Due to this condition the form $p^* \xi - F(\hat{X}, \cdot)$ is basic for the
bundle determined by $\bar{p}$, and can therefore be pushed forward to
$\bar{M}$.  We define a map
\begin{align*}
\phi(X + \xi) = \bar{p}_*(\hat{X}) + p^* \xi - F(\hat{X}, \cdot).
\end{align*}
\end{defn}

\begin{lemma} \label{phicohomcond} The map $\phi$ defined above depends only on
$[H]$ and $[\bar{H}]$.
\begin{proof} Following the discussion in Remark \ref{tdualrmk}, if $H' = H +
dB$ then
\begin{align*}
p^* H' - \bar{p}^* \bar{H} = d(F + p^* B) =: dF'
\end{align*}
Moreover, the action of $p^* B$ on $\mathfrak t_M^k \otimes \mathfrak
t_{\bar{M}}^k$ is trivial.  Hence when lifting vectors to the configuration
space as in 
Definition \ref{phidef}, using either $F$ or $F'$ yields the same result, and so
the lemma follows.
\end{proof}
\end{lemma}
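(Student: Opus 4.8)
The plan is to reduce the statement to comparing the map $\phi$ built from two choices of the closed form $F$. Following Remark \ref{tdualrmk}, if I replace $H$ by a cohomologous representative $H' = H + dB$ with $B \in \Omega^2_{T^k}(M)$, then $p^* H' - \bar{p}^* \bar{H} = d(F + p^* B)$, so $F' = F + p^* B$ is an admissible choice of potential. Thus it suffices to understand how the lift $\hat{X}$ of Definition \ref{phidef} and the pushed-forward one-form $p^*\xi - F(\hat X,\cdot)$ change when $F$ is replaced by $F'$, and to show the net effect is intrinsically trivial.

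The key structural observation is that the lift $\hat{X}$ of $X$ to $T(M\times_B\bar M)$ is determined only up to $\ker p_* = \mathfrak{t}^k_{\bar M}$, and that its $\mathfrak{t}^k_{\bar M}$-component is pinned down by the defining pairing $F(\hat X, Y)$ for $Y \in \mathfrak{t}^k_M$, i.e.\ by the mixed component $F(\mathfrak{t}^k_{\bar M},\mathfrak{t}^k_M)$. Here I would use the fact the author highlights: for $W \in \mathfrak{t}^k_{\bar M}$ and $Y \in \mathfrak{t}^k_M$ one has $dp\,W = 0$, whence $(p^* B)(W,Y) = B(dp\,W, dp\,Y) = 0$. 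Consequently the ``slope'' of the lift condition in the $\mathfrak{t}^k_{\bar M}$-direction is the same for $F$ and $F'$; only the inhomogeneous term changes, shifting by $(p^* B)(\hat X, Y) = B(X,Y)$. This forces $\hat X$ to move to a new lift $\hat X'$ differing by the $\mathfrak{t}^k_{\bar M}$-vector whose $\bar\theta$-pairing against $\mathfrak{t}^k_M$ is governed by $B(X,\cdot)$.

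Finally I would assemble the two changes: the shift in $\bar p_* \hat X$ coming from the new fiber component, and the extra term $(p^* B)(\hat X',\cdot) = p^*(i_X B)$ appearing in $p^*\xi - F'(\hat X',\cdot)$. Decomposing everything into basic (descending from $B$) and fiber pieces on $M\times_B\bar M$, I expect these contributions to combine precisely into $\phi_{F'}(X+\xi) = \phi_F\big(X + \xi - i_X B\big)$, i.e.\ precomposition of $\phi$ with the $B$-field transform $e^{-B}$. Since $e^{B}$ is exactly the change of isotropic splitting induced by $H \mapsto H + dB$, this says that $\phi$ is insensitive to the representative once read as a morphism of the intrinsic reduced Courant algebroids, hence depends only on $[H]$ and $[\bar H]$. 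I expect the main obstacle to be this last bookkeeping step: verifying that the shift in the lift's fiber component and the new $p^*(i_X B)$ contribution to the pushed-forward one-form cancel against one another \emph{except} for exactly one $B$-field transform, which requires tracking the basic/vertical decompositions with respect to both $p$ and $\bar p$ carefully and correctly.
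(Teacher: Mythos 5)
Your proposal is correct, and it is in fact sharper than the paper's own proof, which takes a shortcut at exactly the point you isolate. Both arguments begin the same way: reduce to comparing $\phi_F$ with $\phi_{F'}$ for $F' = F + p^*B$, and both invoke the vanishing of $p^*B$ on $\mathfrak{t}^k_M \otimes \mathfrak{t}^k_{\bar{M}}$ (clear, since $\mathfrak{t}^k_{\bar{M}} = \ker p_*$). The paper concludes from this vanishing alone that the lift $\hat{X}$ of Definition \ref{phidef} is literally unchanged, hence that $\phi_{F'} = \phi_F$ on the nose. As your analysis shows, that vanishing only guarantees that the pairing which pins down the $\mathfrak{t}^k_{\bar{M}}$-ambiguity is the same for $F$ and $F'$; the defining condition also picks up the inhomogeneous term $p^*B(\hat{X},Y) = B(X,p_*Y) = p^*(i_X B)(Y)$, which is nonzero whenever $B$ is invariant but not basic, so the paper's literal conclusion is valid only for basic representatives. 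Your transformation law is the correct general statement, and the bookkeeping you flagged as the main obstacle closes up with no residue: since $p^*B(\hat{X}',\cdot) = p^*(i_X B)$, both the lift condition and the pushed-forward one-form for the pair $(F',\xi)$ coincide verbatim with those for $(F,\xi - i_X B)$, giving $\phi_{F'}(X+\xi) = \phi_F(X+\xi-i_X B)$ exactly, i.e.\ $\phi_{F'} = \phi_F \circ e^{-B}$ (up to the usual sign conventions for the $B$-field action). In terms of what each approach buys: the paper's one-line argument establishes equality of the maps when the representative changes by a basic form; yours establishes the equivariance valid for every invariant representative, which is moreover the only sense in which the lemma can hold in general---$\phi$ is well defined as a morphism of the intrinsic Courant algebroids determined by $[H]$ and $[\bar{H}]$, the factor $e^{-B}$ being precisely the canonical change-of-splitting isomorphism of (\ref{Hchange}).
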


\section{Geometric T-duality} \label{Tdualgeom}

In this section we present the notion of $T$-duality for generalized metrics. 
We
take as background data topologically T-dual $S^1$-bundles $(M, H, \theta)$ and
$(\bar{M}, \bar{H}, \bar{\theta})$.  The metric
data then consists of an $S^1$-invariant metric $g$ on $M$ and an
$S^1$-invariant two-form $b$ on $M$.  In \cite{Buscher}, \cite{B2}
Buscher discovered a way to transform this data, as well as an auxiliary
dilaton, to the manifold $\bar{M}$ in such a way that fixed points of
(\ref{RGflow}) on $M$ are transformed into fixed points of (\ref{GFRGflow}) with
a particular choice of $f_t$ on $\bar{M}_t$.  
The content of Theorem \ref{Tdualflow} is to show that this behavior persists
for general solutions of (\ref{RGflow}).

\subsection{Duality of geometric structures}

\begin{defn} \label{dualmetric} Let $(M, H,\theta)$ and $(\bar{M},
\bar{H},\bar{\theta})$ be T-dual.  Given $\G$ a generalized metric on $(TM
\oplus T^*M) / T^k$, the \emph{dual metric} is
\begin{gather} \label{Tdualmetrics}
\bar{\G} := \phi \G \phi^{-1}
\end{gather}
\end{defn}

\begin{remark} The simplicity of this definition illustrates the value of
adopting the viewpoint of Courant algebroids.  Indeed, using the map $\phi$ it
is possible to easily define T-duality
 transformations for other natural objects such as generalized complex
structures.  By working out the induced map on $(g,b)$ one recovers the famous
``Buscher rules,'' \cite{Buscher}, \cite{B2}, which we now record.
\end{remark}

Given $(M, H,\theta)$ and $(\bar{M}, \bar{H},\bar{\theta})$
T-dual bundles with connections $\theta$ and $\bar{\theta}$, recall that an
$S^1$-invariant generalized metric $\G$ is determined by an $S^1$ invariant pair
$(g, b)$ of metric and two-form potential on $M$, which can be expressed as
\begin{gather} \label{generalmetric}
 \begin{split}
 g =&\ g_0 \theta \odot \theta + g_1 \odot \theta + g_2\\
 b =&\ b_1 \wedge \theta + b_2  
 \end{split}
\end{gather}
where $g_i$ and $b_i$ are basic forms of degree $i$.  

\begin{lemma} (\textbf{Buscher Rules}) Suppose $(M, H, \theta)$ and $(\bar{M},
\bar{H}, \bar{\theta})$ are topologically T-dual.  Given $\G$ an
 $S^1$-invariant generalized metric on $TM \oplus T^*M$ and $\bar{\G} = \phi \G
\phi^{-1}$ the dual metric on $T \bar{M} \oplus T^* \bar{M}$,
 if the pair $(g,b)$ associated to $\G$ is given by (\ref{generalmetric}), then
the pair $(\bar{g}, \bar{b})$ determined by $\G$ takes the form
\begin{gather} \label{bus}
 \begin{split}
  \bar{g} =&\ \frac{1}{g_0} \bar{\theta} \odot \bar{\theta} - \frac{b_1}{g_0}
\odot \bar{\theta} + g_2 + \frac{b_1 \odot b_1 - g_1 \odot g_1}{g_0}\\
 \bar{b} =&\ - \frac{g_1}{g_0} \wedge \bar{\theta} + b_2 + \frac{g_1 \wedge
b_1}{g_0}.
 \end{split}
\end{gather}
\end{lemma}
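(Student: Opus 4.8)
The plan is to use the fact that a generalized metric is equivalent to its $(+1)$-eigenbundle, together with the tensoriality of this eigenbundle under $\phi$. Regarding $g,b$ as maps $TM\to T^*M$ via $X\mapsto i_Xg$ and $X\mapsto i_Xb$, one reads off from the matrix form of $\G$ in Lemma \ref{reversevariation} that the $(\pm1)$-eigenbundles of $\G$ are the graphs
\[
V_\pm=\{\,X+i_Xb\pm i_Xg\ :\ X\in TM\,\}\subset TM\oplus T^*M.
\]
Since $\bar\G=\phi\G\phi^{-1}$, the eigenbundles transform by $\bar V_\pm=\phi(V_\pm)$. Hence it suffices to compute $\phi$ on an adapted frame, transport the graphs $V_\pm$, and re-express each $\phi(V_\pm)$ as the graph of $i_{\bar X}\bar b\pm i_{\bar X}\bar g$; taking symmetric and skew parts then produces the pair $(\bar g,\bar b)$.

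First I would compute $\phi$ on the frame adapted to $TM=\langle\dth\rangle\oplus\HH$ and $T^*M=\langle\theta\rangle\oplus\HH^*$, where $\HH$ is the horizontal distribution and $\HH^*$ the basic $1$-forms. Specializing Definition \ref{phidef} to circle bundles, so that $F=p^*\theta\wedge\bar p^*\bar\theta$, the lifting condition can be solved explicitly, and it shows that $\phi$ interchanges the vertical and connection directions, $\dth\mapsto\bar\theta$ and $\theta\mapsto e_{\bar\theta}$ (with signs fixed by the orientation of $F$), while acting as the identity on all basic vectors and forms under the canonical identification of horizontal distributions over $B$. Since $\phi$ is a neutral isometry, $\phi(V_+)$ is automatically positive definite, so the resulting $\bar g$ is guaranteed to be a genuine Riemannian metric.

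Next I would carry out the substitution. Writing $X=a\,\dth+X_h$ and using the decomposition (\ref{generalmetric}) of $(g,b)$, I expand $i_X(b\pm g)$ into its $\theta$-component and its basic component, apply $\phi$ to $v=X+i_X(b\pm g)$, and collect the $T\bar M$- and $T^*\bar M$-parts to obtain a pair $(\bar X,\bar\xi)$. Solving the resulting relation for $a$ in terms of $\bar X$ is exactly the point at which the fiber length $g_0=g(\dth,\dth)$ is inverted, producing the $1/g_0$ prefactors throughout (\ref{bus}). Re-inserting this expression for $a$ into $\bar\xi$ and splitting off its symmetric part (which is $\bar g$) from its skew part (which is $\bar b$) then yields (\ref{bus}).

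The main difficulty is entirely the bookkeeping in this last step. One must hold the symmetric-product conventions and the signs introduced by $\phi$ fixed throughout, and, most delicately, recover the quadratic corrections $\tfrac{1}{g_0}(b_1\odot b_1-g_1\odot g_1)$ in $\bar g$ and $\tfrac{1}{g_0}\,g_1\wedge b_1$ in $\bar b$: these appear only after the fiber variable $a$ has been eliminated, since it is the re-insertion of $a$ into the basic part of $i_X(b\pm g)$ that generates the terms quadratic in $g_1$ and $b_1$. The positive-definiteness of $\phi(V_+)$ noted above serves as the structural check that the symmetric part so obtained is indeed a metric, confirming the final form (\ref{bus}).
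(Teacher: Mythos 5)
Your proposal is correct, and it is essentially the argument the paper intends: the paper records the Buscher rules without writing out a proof, as the direct consequence of Definition \ref{dualmetric} (``working out the induced map on $(g,b)$'' under $\bar{\G} = \phi \G \phi^{-1}$), and your derivation is exactly that computation, organized via the $+1$-eigenbundle $V_+ = \{X + i_X b + i_X g\}$ and its transport $\phi(V_+)$ as in Cavalcanti--Gualtieri. Your identification of $\phi$ on the adapted frame (swapping $\dth \leftrightarrow \bar{\theta}$ and $\theta \leftrightarrow e_{\bar{\theta}}$ up to sign, identity on basic data) and the elimination of the fiber coefficient $a$ — which is indeed where the $1/g_0$ factors and the quadratic terms $\tfrac{1}{g_0}(b_1 \odot b_1 - g_1 \odot g_1)$, $\tfrac{1}{g_0} g_1 \wedge b_1$ arise — reproduce (\ref{bus}) as claimed.
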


For the calculations to come later, it will be fruitful to give yet another
version of the T-duality relationship explicitly in terms of the canonical
decomposition of
an $S^1$-invariant pair $(g, b)$ on a principal bundle which we now record.

\begin{lemma} \label{metricdecomp} A $S^1$-invariant metric on a principal
bundle with canonical vector field $e_{\theta}$ is uniquely
determined by a base metric, a family of fiber metrics, and a connection.  More
precisely, $g$ may be uniquely expressed 
\begin{align*}
g = \phi \theta \otimes \theta + h
\end{align*}
where
\begin{align*}
\phi =&\ g(\dth, \dth)\\
\theta =&\ \frac{g(\dth, \cdot)}{g(\dth,
\dth)}\\
h(\cdot, \cdot) =&\ g( \pi^{\theta} \cdot, \pi^{\theta} \cdot),
\end{align*}
and here $\pi^{\theta}$ is the horizontal projection determined by $\theta$,
i.e.
\begin{align*}
\pi^{\theta}(X) = X - \theta(X) \dth.
\end{align*}
\end{lemma}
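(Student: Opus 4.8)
The plan is to define the three objects $\phi$, $\theta$, and $h$ by the given formulas, verify that each carries the claimed geometric meaning (fiber metric, connection one-form, base metric), establish the decomposition by a single contraction computation, and then deduce uniqueness by showing that any expression of the stated form must reproduce these formulas.

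First I would record that, since $\dth$ is the nowhere-vanishing fundamental vector field generating the $S^1$-action and $g$ is positive definite, the function $\phi = g(\dth, \dth)$ is strictly positive, so $\theta = g(\dth, \cdot)/\phi$ is a well-defined one-form. The normalization $\theta(\dth) = g(\dth,\dth)/\phi = 1$ is immediate, and the invariance $L_{\dth}\theta = 0$ follows from $S^1$-invariance of $g$ together with $[\dth, \dth] = 0$; hence $\theta$ is a genuine principal connection one-form. Likewise the horizontal projection satisfies $\pi^{\theta}(\dth) = \dth - \theta(\dth)\dth = 0$, so that $h(\dth, \cdot) = g(\pi^{\theta}\dth, \pi^{\theta}\cdot) = 0$. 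Combined with invariance, this shows $h$ is a basic, invariant symmetric two-tensor that is positive definite on the horizontal distribution, and therefore descends to a metric on the base, while $\phi$ plays the role of the (base-varying) fiber metric.

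The decomposition identity is then a one-line contraction. Expanding $h(X,Y) = g\bigl(X - \theta(X)\dth,\, Y - \theta(Y)\dth\bigr)$ and using the defining relation $g(\dth, X) = \phi\,\theta(X)$, the two cross terms each contribute $-\phi\,\theta(X)\theta(Y)$ while the diagonal term contributes $+\phi\,\theta(X)\theta(Y)$, so the three collapse to leave $h(X,Y) = g(X,Y) - \phi\,\theta(X)\theta(Y)$. Rearranging gives precisely $g = \phi\,\theta \otimes \theta + h$.

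For uniqueness, I would suppose $g = \phi'\,\theta' \otimes \theta' + h'$ is any expression with $\theta'$ a connection one-form (so $\theta'(\dth) = 1$) and $h'$ basic. Contracting twice with $\dth$ recovers $\phi' = g(\dth, \dth) = \phi$; contracting once gives $\phi'\,\theta'(X) = g(\dth, X)$, hence $\theta' = \theta$; and then $h' = g - \phi\,\theta \otimes \theta = h$. Honestly, there is no serious analytic obstacle here—every step is algebraic or follows from $S^1$-invariance. The point deserving the most care is confirming that the formally-defined $\theta$ and $h$ genuinely are a connection and a base metric (the invariance, horizontality, and descent claims), since it is this identification, rather than the contraction identity itself, that gives the decomposition its asserted content.
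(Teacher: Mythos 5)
Your proof is correct. The paper actually states Lemma \ref{metricdecomp} without any proof, treating it as standard, so there is no argument to compare against; your verification --- defining $\phi$, $\theta$, $h$ by the stated formulas, checking $\theta(\dth) = 1$ and that $h$ is basic, collapsing the cross terms in $g(\pi^{\theta}\cdot, \pi^{\theta}\cdot)$ to get $h = g - \phi\,\theta \otimes \theta$, and recovering uniqueness by contracting twice and once with $\dth$ --- is exactly the natural one, and it mirrors in structure the contraction argument the paper does supply for the companion two-form decomposition, Lemma \ref{bflddecomp}.
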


\begin{lemma} \label{bflddecomp} Let $M$ denote the total space of an $S^1$
principal bundle.  Given $\theta$ a connection on $M$, an $S^1$ invariant
two-form $b$ admits a unique decomposition
\begin{align*}
b = \theta \wedge \eta + \mu
\end{align*}
where $\eta$ and $\mu$ are basic forms.
\begin{proof} Let $\eta = \dth \hook b$.  Obviously
$\eta(\dth) = 0$ and so $\eta$ is basic.  We may then
declare
\begin{align*}
\mu = b - \theta \wedge \eta
\end{align*}
Observe that
\begin{align*}
\dth \hook \mu  = \dth \hook b -
\dth \hook \left(\theta \wedge \eta \right) = \eta - \eta =
0,
\end{align*}
so that $\mu$ is basic as well.
\end{proof}
\end{lemma}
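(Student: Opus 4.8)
The plan is to extract the two components by contracting the two-form with the fundamental vertical vector field $\dth$ that generates the circle action, since $\theta$ is precisely the splitting singling out this vertical direction. The key bookkeeping point is that a form is \emph{basic} exactly when it is both horizontal (annihilated by $\dth \hook$) and invariant ($L_{\dth}$-closed), so I will have to verify both conditions for each piece rather than just horizontality.

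First I would set $\eta := \dth \hook b$ as the natural candidate for the vertical part. Horizontality is immediate from $(\dth \hook) \circ (\dth \hook) = 0$, while invariance follows because $b$ is $S^1$-invariant and $L_{\dth}$ commutes with $\dth \hook$ (as $[\dth,\dth]=0$), giving $L_{\dth}\eta = \dth \hook L_{\dth} b = 0$. Next I would define $\mu := b - \theta \wedge \eta$ and check it is basic as well. Contracting with $\dth$ and using the connection normalization $\theta(\dth)=1$ yields $\dth \hook \mu = \eta - \eta = 0$, so $\mu$ is horizontal; its invariance is inherited from that of $b$, of the connection form $\theta$, and of $\eta$. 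This establishes existence of the decomposition $b = \theta \wedge \eta + \mu$.

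For uniqueness, suppose $b = \theta \wedge \eta' + \mu'$ is any decomposition with $\eta',\mu'$ basic. Contracting with $\dth$ and using that $\eta'$ and $\mu'$ are horizontal forces $\eta' = \dth \hook b = \eta$, whence $\mu' = b - \theta \wedge \eta' = \mu$. I do not expect a genuine obstacle here; this is a direct vertical-horizontal splitting paralleling the metric decomposition of Lemma \ref{metricdecomp}. The only point deserving care is verifying the invariance half of the basic condition, not merely horizontality, which rests on the fact that a principal connection form is by definition invariant under the structure group action.
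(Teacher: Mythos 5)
Your proposal is correct and takes essentially the same route as the paper: set $\eta = \dth \hook b$, define $\mu = b - \theta \wedge \eta$, and verify horizontality by contracting with $\dth$. In fact your write-up is slightly more complete than the paper's, which checks only horizontality (not the invariance half of the basic condition) and never verifies the uniqueness asserted in the statement, both of which you supply.
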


\begin{prop} \label{Tdualrules} Let $(M, H, \theta)$ and $(\bar{M}, \bar{H},
\bar{\theta})$ be topologically T-dual, and suppose $(g,b)$ is dual to
$(\bar{g}, \bar{b})$.  Let $\theta_g, \phi_g, h_g$ denote the
connection $1$-form, fiber metric, and base metric determined by $g$ via Lemma
\ref{metricdecomp}.  Furthermore, let $\eta_g$ and $\mu_g$ denote the basic
$1$-form and $2$-form associated to $b$ and $\theta_g$ via Lemma
\ref{bflddecomp}.  Then if $\theta_{\bar{g}}$, etc. denote the corresponding
data associated to $\bar{g}$, one has
\begin{align*}
\phi_{\bar{g}} =&\ \frac{1}{\phi_g}\\
\theta_{\bar{g}} =&\ \bar{\theta} + \eta_g\\
h_{\bar{g}} =&\ h_g\\
\eta_{\bar{g}} =&\ \theta_g - \theta\\
\mu_{\bar{g}} =&\ \mu_g - \eta_g \wedge \eta_{\bar{g}}.
\end{align*}
\begin{proof} First we compute
\begin{align*}
\theta_g = \theta + \frac{g_1}{g_0}.
\end{align*}
Then we obtain
\begin{align*}
\eta_g = \dth \hook b =&\ - b_1.
\end{align*}
Then we may express
\begin{align*}
\mu_g =&\ b - \theta_g \wedge \eta_g\\
=&\ b_1 \wedge \theta - \left( \theta + \frac{g_1}{g_0} \right) \wedge (-b_1) +
b_2\\
=&\ b_2 + \frac{g_1}{g_0} \wedge b_1.
\end{align*}
Furthermore we obtain
\begin{align*}
\bar{\theta}_{\bar{g}} = \bar{\theta} - b_1 = \bar{\theta} + \eta_g
\end{align*}
Then, according to the Buscher rules,
\begin{align*}
\eta_{\bar{g}} =&\ \bar{\dth} \hook b\\
=&\ \frac{g_1}{g_0}\\
=&\ \theta_g - \theta.
\end{align*}
Then we obtain
\begin{align*}
\mu_{\bar{g}} =&\ \bar{b} - \bar{\theta}_{\bar{g}} \wedge \eta_{\bar{g}}\\
=&\ - \frac{g_1}{g_0} \wedge \bar{\theta} + b_2 + \frac{g_1 \wedge b_1}{g_0} -
\left( \bar{\theta} + \eta_g \right) \wedge \left( \theta_g - \theta \right)\\
=&\ (\bar{\theta} - b_1) \wedge \frac{g_1}{g_0} + b_2 - \bar{\theta}_{\bar{g}}
\wedge (\frac{g_1}{g_0})\\
=&\ b_2\\
=&\ \mu_g - \frac{g_1}{g_0} \wedge b_1\\
=&\ \mu_g - \eta_{g} \wedge \eta_{\bar{g}}.
\end{align*}
\end{proof}
\end{prop}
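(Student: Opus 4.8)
The plan is to reduce the entire statement to the explicit Buscher rules (\ref{bus}) together with the contraction formulas of Lemmas \ref{metricdecomp} and \ref{bflddecomp}, applied twice: once to the pair $(g,b)$ and once to its dual. First I would unpack the geometric data of the original pair. Applying Lemma \ref{metricdecomp} to the metric $g = g_0\,\theta\odot\theta + g_1\odot\theta + g_2$ of (\ref{generalmetric}), contraction with the fiber field $\dth$ (using that $g_1,g_2$ are basic while $\theta(\dth)=1$) gives $\phi_g = g(\dth,\dth) = g_0$ and $\theta_g = g(\dth,\cdot)/g_0 = \theta + g_1/g_0$; feeding these into $h_g = g - \phi_g\,\theta_g\otimes\theta_g$ then yields the base metric $h_g = g_2 - g_1\odot g_1/g_0$. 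Likewise, Lemma \ref{bflddecomp} applied to $b = b_1\wedge\theta + b_2$ gives $\eta_g = \dth\hook b = -b_1$ and, crucially relative to the \emph{metric} connection $\theta_g$ rather than the background $\theta$, $\mu_g = b - \theta_g\wedge\eta_g = b_2 + (g_1/g_0)\wedge b_1$. This first step is pure bookkeeping, but it is essential to record since the ambiguity in the decomposition of $b$ is exactly the choice of connection.

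Next I would perform the identical extraction on the dual pair, now feeding in the Buscher rules. Contracting the dual metric $\bar g$ of (\ref{bus}) with the dual fiber field $\bar{\dth}$ (so that $b_1,g_1,g_2$ are basic on $\bar M$ and $\bar\theta(\bar{\dth})=1$) produces $\phi_{\bar g} = \bar g(\bar{\dth},\bar{\dth}) = 1/g_0$ and $\bar g(\bar{\dth},\cdot) = (\bar\theta - b_1)/g_0$, hence $\theta_{\bar g} = \bar\theta - b_1$. Comparing with the first step, $\phi_{\bar g} = 1/\phi_g$ and $\theta_{\bar g} = \bar\theta + \eta_g$ are immediate. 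For the base metric I would form $h_{\bar g} = \bar g - \phi_{\bar g}\,\theta_{\bar g}\otimes\theta_{\bar g}$; the $\bar\theta\odot\bar\theta$, $b_1\odot\bar\theta$ and $b_1\odot b_1$ contributions from $\phi_{\bar g}\,\theta_{\bar g}\otimes\theta_{\bar g}$ cancel exactly against those already present in $\bar g$, leaving $h_{\bar g} = g_2 - g_1\odot g_1/g_0 = h_g$ — the geometric statement that T-duality fixes the base metric. Finally, contracting the dual two-form $\bar b$ of (\ref{bus}) with $\bar{\dth}$ gives $\eta_{\bar g} = \bar{\dth}\hook\bar b = g_1/g_0 = \theta_g - \theta$.

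The last identity $\mu_{\bar g} = \mu_g - \eta_g\wedge\eta_{\bar g}$ is where the real care is required, and I expect it to be the main obstacle. Writing $\mu_{\bar g} = \bar b - \theta_{\bar g}\wedge\eta_{\bar g} = \bar b - (\bar\theta - b_1)\wedge(g_1/g_0)$ and expanding with (\ref{bus}), I would track four exterior terms: the two proportional to $\bar\theta\wedge g_1$ cancel (via $g_1\wedge\bar\theta = -\bar\theta\wedge g_1$), and the two proportional to $g_1\wedge b_1$ cancel, so that $\mu_{\bar g}$ collapses to the purely basic form $b_2$. It then remains only to recognize $b_2 = \mu_g - \eta_g\wedge\eta_{\bar g}$: since $\mu_g = b_2 + (g_1/g_0)\wedge b_1$ while $\eta_g\wedge\eta_{\bar g} = (-b_1)\wedge(g_1/g_0) = (g_1\wedge b_1)/g_0$, these extra terms again cancel. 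The genuine difficulty throughout is consistent sign and normalization bookkeeping for the symmetric and exterior products, and more conceptually keeping straight which of the four connections $\theta,\theta_g,\bar\theta,\theta_{\bar g}$ is in play at each stage — the content of the proposition being precisely that the dual metric connection $\theta_{\bar g}$ absorbs the original flux datum $\eta_g$, while the dual flux $\eta_{\bar g}$ absorbs the original off-diagonal metric datum, making manifest the interchange of metric and $B$-field under T-duality.
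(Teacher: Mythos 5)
Your proposal is correct and follows essentially the same route as the paper's proof: both unpack $(g,b)$ and $(\bar{g},\bar{b})$ via Lemmas \ref{metricdecomp} and \ref{bflddecomp}, feed in the Buscher rules (\ref{bus}), and obtain $\mu_{\bar{g}} = b_2 = \mu_g - \eta_g \wedge \eta_{\bar{g}}$ through the same wedge-product cancellations. The only difference is that you explicitly verify $\phi_{\bar{g}} = 1/\phi_g$ and $h_{\bar{g}} = h_g$, which the paper's proof reads off from (\ref{bus}) without comment.
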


\begin{lemma} \label{consistencylemma} Let $(M, H, \theta)$ and $(\bar{M},
\bar{H}, \bar{\theta})$ be topologically T-dual, and
 suppose $(g,b)$ is dual to $(\bar{g}, \bar{b})$.  Then (\ref{toptdual}) holds
for the quadruple $(H_b,\theta_g, \bar{H}_{\bar{b}}, \bar{\theta}_{\bar{g}})$.
\begin{proof} We directly compute (suppressing the presence of $p^*$ and
$\bar{p}^*$) using Proposition \ref{Tdualrules} that
\begin{align*}
H_{b} - \bar{H}_{\bar{b}}=&\ H + db - \bar{H} - d \bar{b}\\
=&\ H - \bar{H} + d \left( \theta_g \wedge \eta_g + \mu_g \right) - d \left(
\bar{\theta}_{\bar{g}} \wedge \eta_{\bar{g}} + \mu_{\bar{g}} \right)\\
=&\ d \left(\theta \wedge \bar{\theta} + \theta_g \wedge \eta_g -
\bar{\theta}_{\bar{g}} \wedge \eta_{\bar{g}} + \eta_g \wedge \eta_{\bar{g}}
\right)\\
=&\ d(\theta_g \wedge \bar{\theta}_{\bar{g}}).
\end{align*}
\end{proof}
\end{lemma}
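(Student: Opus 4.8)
The plan is to reduce the claim to the topological T-duality relation already satisfied by the background data $(H,\theta)$, $(\bar{H},\bar{\theta})$, and then to track exactly how passing to the metric-adapted data $(H_b,\theta_g)$ and $(\bar{H}_{\bar{b}},\bar{\theta}_{\bar{g}})$ modifies the relevant forms, using the decomposition of Lemma \ref{bflddecomp} together with the explicit duality rules of Proposition \ref{Tdualrules}.

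First I would unwind the goal. Suppressing the pullbacks $p^{*},\bar{p}^{*}$, the assertion of (\ref{toptdual}) for the quadruple is
\[
H_b - \bar{H}_{\bar{b}} = d(\theta_g \wedge \bar{\theta}_{\bar{g}}),
\]
where $H_b = H + db$ and $\bar{H}_{\bar{b}} = \bar{H} + d\bar{b}$. Since $(M,H,\theta)$ and $(\bar{M},\bar{H},\bar{\theta})$ are topologically T-dual, the starting point is $H - \bar{H} = d(\theta \wedge \bar{\theta})$, so it suffices to control the difference $db - d\bar{b}$ and then to reconcile the replacement of $\theta \wedge \bar{\theta}$ by $\theta_g \wedge \bar{\theta}_{\bar{g}}$.

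Next I would insert the decompositions $b = \theta_g \wedge \eta_g + \mu_g$ and $\bar{b} = \bar{\theta}_{\bar{g}} \wedge \eta_{\bar{g}} + \mu_{\bar{g}}$ furnished by Lemma \ref{bflddecomp}. The purely basic terms are handled by the relation $\mu_{\bar{g}} = \mu_g - \eta_g \wedge \eta_{\bar{g}}$ of Proposition \ref{Tdualrules}, which converts $\mu_g - \mu_{\bar{g}}$ into $\eta_g \wedge \eta_{\bar{g}}$. Combining this with the background relation yields
\[
H_b - \bar{H}_{\bar{b}} = d\bigl(\theta \wedge \bar{\theta} + \theta_g \wedge \eta_g - \bar{\theta}_{\bar{g}} \wedge \eta_{\bar{g}} + \eta_g \wedge \eta_{\bar{g}}\bigr).
\]

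The final and main step is to verify that the $2$-form inside the differential collapses to $\theta_g \wedge \bar{\theta}_{\bar{g}}$. This is where I expect the genuine content: substituting the identities $\theta_g = \theta + \eta_{\bar{g}}$ and $\bar{\theta}_{\bar{g}} = \bar{\theta} + \eta_g$ (both read off from Proposition \ref{Tdualrules}) into $\theta_g \wedge \bar{\theta}_{\bar{g}}$ and expanding, one should recover exactly the four wedge terms above, with the antisymmetry of the wedge on the basic $1$-forms $\eta_g, \eta_{\bar{g}}$ producing the cancellation of the spurious $\eta_g \wedge \eta_{\bar{g}}$ contributions. The main obstacle is therefore not conceptual but the careful sign and wedge-product bookkeeping among $\theta,\bar{\theta},\eta_g,\eta_{\bar{g}}$; once the interior $2$-forms are shown to agree, applying $d$ completes the proof.
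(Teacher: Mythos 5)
Your proposal is correct and follows essentially the same route as the paper's proof: insert the decompositions of $b$ and $\bar{b}$ from Lemma \ref{bflddecomp}, use $\mu_{\bar{g}} = \mu_g - \eta_g \wedge \eta_{\bar{g}}$ together with the background relation $H - \bar{H} = d(\theta \wedge \bar{\theta})$, and then collapse the resulting $2$-form to $\theta_g \wedge \bar{\theta}_{\bar{g}}$ via the identities $\theta_g = \theta + \eta_{\bar{g}}$ and $\bar{\theta}_{\bar{g}} = \bar{\theta} + \eta_g$ from Proposition \ref{Tdualrules}. Indeed, your intermediate display is exactly the third line of the paper's computation, so the two arguments coincide step for step, including the final wedge-product cancellation.
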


\begin{lemma} Given $(g, b)$ and $(\bar{g}, \bar{b})$ T-dual data, if we declare
$\theta_g$ and $\bar{\theta}_g$ to be the background connections, which is valid
by Lemma \ref{consistencylemma}, then the pair $(g, 0)$ and $(\bar{g}, 0)$ is
$T$-dual with respect to this background.
\begin{proof} This follows immediately from Proposition \ref{Tdualrules}.
\end{proof}
\end{lemma}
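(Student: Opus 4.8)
The plan is to deduce the claim directly from the transformation rules of Proposition \ref{Tdualrules}, the only real content being to track how the canonical decomposition data behave once the background connections are switched to $\theta_g$ and $\bar\theta_{\bar g}$. The guiding principle is that the metric tensors $g$ and $\bar g$, and hence the quantities $\phi_g$, $h_g$ and the intrinsic metric connection $\theta_g = g(\dth, \cdot)/g(\dth, \dth)$ furnished by Lemma \ref{metricdecomp}, are independent of the choice of background, whereas the torsion potential and the splitting of the cross term into $\eta$ and $\theta$ are not.

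First I would invoke Lemma \ref{consistencylemma}, which guarantees that $(H_b, \theta_g, \bar H_{\bar b}, \bar\theta_{\bar g})$ satisfies the topological $T$-duality relation (\ref{toptdual}); this legitimizes $\theta_g$ and $\bar\theta_{\bar g}$ as background connections and allows Proposition \ref{Tdualrules} to be applied verbatim with these in the roles of $\theta$ and $\bar\theta$. Next I would compute the decomposition data of the pair $(g, 0)$ relative to the new background. Since the two-form is now $0$, Lemma \ref{bflddecomp} gives $\eta_g = \dth \hook 0 = 0$ and $\mu_g = 0$. Moreover the connection determined by $g$ is by definition $\theta_g$, so relative to the background $\theta_g$ the cross term of $g$ vanishes; equivalently, the identity $\theta_g = (\text{background}) + g_1/g_0$ established in the proof of Proposition \ref{Tdualrules} now reads $\theta_g = \theta_g + g_1/g_0$, forcing the new $g_1$ to vanish.

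Feeding these vanishings into Proposition \ref{Tdualrules}, the five transformation identities collapse: $\theta_{\bar g} = \bar\theta_{\bar g} + \eta_g = \bar\theta_{\bar g}$, $\eta_{\bar g} = \theta_g - \theta_g = 0$, and $\mu_{\bar g} = \mu_g - \eta_g\wedge\eta_{\bar g} = 0$, so the dual torsion potential reassembles via Lemma \ref{bflddecomp} to $\bar\theta_{\bar g}\wedge\eta_{\bar g} + \mu_{\bar g} = 0$. The surviving identities $\phi_{\bar g} = 1/\phi_g$ and $h_{\bar g} = h_g$ show the dual metric is $\frac{1}{\phi_g}\bar\theta_{\bar g}\odot\bar\theta_{\bar g} + h_g$, which is precisely the original $T$-dual metric $\bar g$, since $\bar g$ is determined by $\bar\G = \phi\G\phi^{-1}$ and is therefore background-independent. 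Hence $(g,0)$ is $T$-dual to $(\bar g, 0)$ with respect to the new background.

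I do not expect a substantive obstacle; the work is entirely bookkeeping. The one point demanding care is to avoid conflating the two roles played by $\theta_g$: it is simultaneously the fixed intrinsic connection determined by the metric $g$ and the newly chosen background connection, and it is exactly this coincidence that makes the cross term and the torsion potential vanish together. Once this is kept straight, the result is an immediate consequence of Proposition \ref{Tdualrules}.
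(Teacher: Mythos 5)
Your proposal is correct and takes essentially the same approach as the paper: the paper's entire proof is the single remark that the claim ``follows immediately from Proposition \ref{Tdualrules},'' and your argument is exactly the bookkeeping that remark leaves implicit (the vanishing of $\eta_g$, $\mu_g$, and the cross term relative to the background $\theta_g$, fed through the five transformation identities). Your closing observation that the dual metric is background-independent, which pins down the dual pair as $(\bar g,0)$, is the right way to close the loop and is justified in the paper by Lemma \ref{phicohomcond}.
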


\begin{lemma} \label{Hdecomp} If $\theta$ denotes a choice of connection, given
$H$ an $S^1$-invariant three-form, $H$ admits a unique decomposition
\begin{align*}
H = \theta \wedge Y + Z
\end{align*}
where $Y$ and $Z$ are basic forms.
\begin{proof} Following the proof of Lemma \ref{bflddecomp} we let $Y = \dth
\hook H$ and $Z = H - \theta \wedge Y$ and this is the required decomposition.
\end{proof}
\end{lemma}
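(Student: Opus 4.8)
The plan is to mimic exactly the proof of Lemma \ref{bflddecomp}, since decomposing an invariant three-form on a principal $S^1$-bundle proceeds formally in the same way as decomposing a two-form. The structural fact I would exploit is that the interior product $\dth \hook (\cdot)$ isolates the component of $H$ proportional to the connection one-form $\theta$, leaving a basic remainder; the whole argument is then an application of the interior-product Leibniz rule together with the connection normalization $\theta(\dth) = 1$ (consistent with Lemma \ref{metricdecomp}, where $\theta = g(\dth,\cdot)/g(\dth,\dth)$).

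First I would set $Y := \dth \hook H$. To see $Y$ is basic, I note that $\dth \hook Y = \dth \hook \dth \hook H = 0$ by antisymmetry of the interior product, so $Y$ is horizontal, while its invariance is inherited from the assumed $S^1$-invariance of $H$ together with $\mathcal{L}_{\dth}\theta = 0$. Next I would define $Z := H - \theta \wedge Y$ and verify that it too is basic by contracting with $\dth$:
\[
\dth \hook Z = \dth \hook H - \theta(\dth)\, Y + \theta \wedge (\dth \hook Y) = Y - Y + 0 = 0,
\]
using the previous step $\dth \hook Y = 0$ and $\theta(\dth)=1$. Invariance of $Z$ is immediate since $H$, $\theta$, and $Y$ are each invariant. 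This delivers the asserted decomposition $H = \theta \wedge Y + Z$ with $Y$ and $Z$ basic.

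For uniqueness, I would suppose $H = \theta \wedge Y' + Z'$ with $Y', Z'$ basic. Contracting both sides with $\dth$ and using $\dth \hook Z' = 0$ together with $\theta(\dth) = 1$ forces $\dth \hook H = Y'$, hence $Y' = Y$; subtracting $\theta \wedge Y$ then gives $Z' = Z$. I do not anticipate any genuine obstacle, as the entire argument is formal and identical in spirit to Lemma \ref{bflddecomp}. The only point meriting a word of care is that the two pieces are not merely horizontal but genuinely \emph{basic}, i.e. also $S^1$-invariant, and this invariance is passed down directly from the hypothesis on $H$.
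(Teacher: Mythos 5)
Your proposal is correct and follows essentially the same route as the paper: the paper's proof simply sets $Y = \dth \hook H$, $Z = H - \theta \wedge Y$ and verifies basicness exactly as in Lemma \ref{bflddecomp}, which is precisely your argument. Your additional explicit checks of $S^1$-invariance and uniqueness are sound refinements of the same idea, not a different method.
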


Next we relate the three-form
decomposition of Lemma \ref{Hdecomp} for $T$-dual structures.

\begin{lemma} \label{Hrelation} Let $(M, g, b)$ and $(\bar{M}, \bar{g},
\bar{b})$ be T-dual data.  Then
\begin{align*}
Z =&\ \bar{Z}, \qquad Y = - \bar{F}_{\bar{\theta}}, \qquad \bar{Y} = -
F_{\theta}.
\end{align*}
\begin{proof} Let $\til{\dth}$ denote the vector field defining the action of
$S^1$ coming from the bundle $M$ induced on the fiber product $M \times_{S^1}
\bar{M}$.  Likewise define $\til{\bar{\dth}}$.  We compute
\begin{align*}
\pi^* Y =&\ \pi^* \left(\dth \hook H \right)\\
=&\ \til{\dth} \hook \pi^* H\\
=&\ \til{\dth} \hook \left(\bar{\pi}^* \bar{H} + d (\theta_g
\wedge \bar{\theta}_{\bar{g}}) \right)\\
=&\ \til{\dth} \hook \left( F_{\theta} \wedge
\bar{\theta}_{\bar{g}} - \theta_g \wedge \bar{F}_{\bar{\theta}} \right)\\
=&\ - \bar{F}_{\bar{\theta}}.
\end{align*}
The calculation of $\bar{\pi}^* \bar{Y}$ is identical.  Finally we have
\begin{align*}
\pi^* Z =&\ \pi^* \left( H - \theta \wedge Y \right)\\
=&\ \bar{\pi}^* \bar{H} + d \left( \theta_g \wedge \bar{\theta}_{\bar{g}}
\right) + \theta_g \wedge \bar{F}_{\bar{\theta}}.\\
=&\ \bar{\pi}^* \bar{H} + F_g \wedge \bar{\theta}_{\bar{g}}\\
=&\ \bar{\pi}^* \bar{H} + \bar{\theta}_g \wedge F_{\theta}\\
=&\ \bar{\pi}^* \bar{H} - \bar{\theta} \wedge \bar{Y}\\
=&\ \bar{\pi}^* \bar{Z}.
\end{align*}
\end{proof}
\end{lemma}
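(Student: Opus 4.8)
The plan is to pull everything back to the fiber product $M \times_{S^1} \bar{M}$ and feed in the twisted T-duality relation supplied by Lemma~\ref{consistencylemma}, namely (in the notation of that lemma's proof) $\pi^* H = \bar{\pi}^* \bar{H} + d(\theta_g \wedge \bar{\theta}_{\bar{g}})$. The engine of the whole computation is the behavior of the lifted vertical field $\til{\dth}$: since it is vertical for the projection to $M$, it is annihilated by the differential of $\bar{p}$, so $\til{\dth} \hook \bar{\pi}^* \omega = 0$ for any $\omega$ pulled back from $\bar{M}$; dually, the lift $\til{\bar{\dth}}$ of the $\bar{M}$-fiber field kills everything pulled back from $M$. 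I would record these contraction rules at the outset, together with the normalizations $\theta_g(\til{\dth}) = 1$, $\bar{\theta}_{\bar{g}}(\til{\dth}) = 0$, and the fact that $d\theta_g$, $d\bar{\theta}_{\bar{g}}$ are basic (hence annihilated by $\til{\dth}$), all of which follow from Proposition~\ref{Tdualrules} and Cartan's formula applied to the invariant connection forms.

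To extract $Y$, I would contract the relation above with $\til{\dth}$. Since $Y = \dth \hook H$ by Lemma~\ref{Hdecomp} and interior product commutes with pullback along the lift, $\pi^* Y = \til{\dth} \hook \pi^* H = \til{\dth} \hook d(\theta_g \wedge \bar{\theta}_{\bar{g}})$, the $\bar{\pi}^* \bar{H}$ term dropping out by the rule above. Expanding $d(\theta_g \wedge \bar{\theta}_{\bar{g}}) = d\theta_g \wedge \bar{\theta}_{\bar{g}} - \theta_g \wedge d\bar{\theta}_{\bar{g}}$ and using $\til{\dth} \hook d\theta_g = 0$, $\til{\dth} \hook \bar{\theta}_{\bar{g}} = 0$, $\til{\dth} \hook \theta_g = 1$, only the second term survives and yields $\pi^* Y = - d\bar{\theta}_{\bar{g}} = - \bar{\pi}^* \bar{F}_{\bar{\theta}}$, i.e. $Y = -\bar{F}_{\bar{\theta}}$. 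The companion identity $\bar{Y} = -F_{\theta}$ is the identical computation with the roles of $M$ and $\bar{M}$ interchanged, which is legitimate because the relation of Lemma~\ref{consistencylemma} is symmetric under that swap; it gives $\bar{\pi}^* \bar{Y} = - d\theta_g = - \pi^* F_{\theta}$.

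For the basic part $Z$, rather than following the three-form through directly I would compare the two basic forms $Z = H - \theta_g \wedge Y$ and $\bar{Z} = \bar{H} - \bar{\theta}_{\bar{g}} \wedge \bar{Y}$ via their pullbacks. Substituting $\pi^* H - \bar{\pi}^* \bar{H} = d(\theta_g \wedge \bar{\theta}_{\bar{g}})$ together with the just-proved $\pi^* Y = -d\bar{\theta}_{\bar{g}}$ and $\bar{\pi}^* \bar{Y} = -d\theta_g$, one finds $\pi^* Z - \bar{\pi}^* \bar{Z} = d\theta_g \wedge \bar{\theta}_{\bar{g}} - \theta_g \wedge d\bar{\theta}_{\bar{g}} + \theta_g \wedge d\bar{\theta}_{\bar{g}} - \bar{\theta}_{\bar{g}} \wedge d\theta_g = d\theta_g \wedge \bar{\theta}_{\bar{g}} - \bar{\theta}_{\bar{g}} \wedge d\theta_g = 0$, the last equality because the degree-two curvature $d\theta_g$ commutes past the degree-one form $\bar{\theta}_{\bar{g}}$. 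Since $Z$ and $\bar{Z}$ are both basic, equality of their pullbacks to the fiber product forces $Z = \bar{Z}$.

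The step I expect to demand the most care is the contraction bookkeeping in the middle paragraph, in particular verifying that $d\theta_g$ and $d\bar{\theta}_{\bar{g}}$ genuinely are basic on the fiber product and correctly identifying them with the curvatures appearing in the statement. This is complicated by the mild notational tension between the background connection $\theta$ and the metric-induced connection $\theta_g$ (and likewise on the barred side); I would fix conventions once using Lemma~\ref{metricdecomp} and Proposition~\ref{Tdualrules}, noting that the generating vertical vector field $\dth$ is \emph{independent} of the choice of connection, so that the decomposition of Lemma~\ref{Hdecomp} and all the interior-product identities above are unaffected by replacing $\theta$ with $\theta_g$.
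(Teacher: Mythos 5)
Your proposal is correct and follows essentially the same route as the paper: pull everything back to the fiber product, invoke the relation $\pi^* H - \bar{\pi}^*\bar{H} = d(\theta_g \wedge \bar{\theta}_{\bar{g}})$ from Lemma~\ref{consistencylemma}, and contract with the lifted vertical fields using exactly the contraction rules you list. The only (cosmetic) difference is in the last step: you show $\pi^* Z - \bar{\pi}^* \bar{Z} = 0$ by cancellation and then use injectivity of pullback on basic forms, whereas the paper massages $\pi^* Z$ directly into $\bar{\pi}^* \bar{Z}$ using the already-established identities for $Y$ and $\bar{Y}$ — your added care about the $\theta$ versus $\theta_g$ convention is a genuine clarification of a point the paper leaves implicit.
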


\section{Proof of Theorem \ref{Tdualflow}} \label{Tdualflowsec}

\begin{rmk} (\textbf{Notational Conventions}) Given an $S^1$-invariant metric as
in Lemma \ref{metricdecomp}, $\N$ will always denote the
covariant derivative with respect to the base metric $h$, whereas the covariant
derivative with respect to $g$ will be denoted $D$ and will be given bars when
necessary.
\end{rmk}

\subsection{Curvature Calculations}

In this subsection we record a number of curvature calculations necessary for
the proof of Theorem \ref{Tdualflow}.  To set up the calculations, we first
choose coordinates at
some point $p \in B$ corresponding to normal coordinates for $h$, the base
metric.  In such a local chart we can express the connection canonically as
$\theta = dy + A_i dx^i$.  Then over any point in $\pi^{-1} p$ we choose a local
frame field
\begin{align*}
e_i := \frac{\del}{\del x^i} - A_i \frac{\del}{\del y}.
\end{align*}
One directly obtains that $e_i \hook \theta = 0$ for all $i$.  Moreover, observe
that
\begin{align*}
[e_i,e_j] = \left( A_{i,j}^{\theta} - A_{j,i}^{\theta} \right)\frac{\del}{\del
y} = - F_{ij} \frac{\del}{\del y}, \qquad [e_i, \frac{\del}{\del y}] = 0,
\end{align*}
where $F$ denotes the curvature of $A$.  Also, note that
\begin{align*}
g(e_i, e_j) = h \left(\frac{\del}{\del x^i}, \frac{\del}{\del x^j} \right).
\end{align*}

\begin{lemma} \label{Riccicalc} With the setup above one has
\begin{gather} \label{KKcurv}
\begin{split}
R_{i j} =&\ ^h R_{ij} - \frac{\phi}{2} \FF_g - \frac{1}{2 \phi} \N_i \N_j
\phi + \frac{1}{4 \phi^2} \N_i \phi \N_j \phi,\\
R_{i\theta} =&\ \frac{\phi}{2} d^*_h F_i - \frac{3}{4} \left(\N \phi
\hook F\right)_i,\\
R_{\theta \theta} =&\ - \frac{1}{2} \gD \phi + \frac{1}{4 \phi} \brs{\N
\phi}^2 + \frac{\phi^2}{4} \brs{F}^2.
\end{split}
\end{gather}
\begin{proof} The proof is a straightforward calculation we include for
convenience.  First we compute the Christoffel symbols.  Note
\begin{gather} \label{KKChris}
\begin{split}
\gG_{ij}^k =&\ ^h \gG_{ij}^k, \qquad \gG_{ij}^{\theta} = - \frac{1}{2} F_{ij},
\qquad \gG_{i \theta}^k = \frac{\phi}{2} h^{kl} F_{il} = - \frac{\phi}{2} h^{kl}
F_{li},\\
\gG_{\theta \theta}^k =&\ - \frac{1}{2} \N^k \phi, \qquad \gG_{i
\theta}^{\theta} = \frac{1}{2
\phi} \N_i \phi, \qquad \gG_{\theta \theta}^{\theta} = 0.
\end{split}
\end{gather}
Observe also the general curvature formula
\begin{align*}
R_{\gb\gg} = R_{\ga\gb\gg}^{\ga} =&\ \del_{\ga} \gG_{\gb \gg}^{\ga} - \del_{\gb}
\gG_{\ga\gg}^{\ga} - \gG_{\ga \gg}^{\mu} \gG_{\gb \mu}^{\ga} + \gG_{\gb
\gg}^{\mu} \gG_{\ga \mu}^{\ga} - C_{\ga\gb}^{\mu} \gG_{\mu \gg}^\ga.
\end{align*}
Using these we may compute
\begin{align*}
 R_{ij} =&\ e_{\ga} \gG_{ i j}^{\ga} - e_i \gG_{\ga j}^{\ga} - \gG_{\ga j}^{\mu}
\gG_{i \mu}^{\ga} + \gG_{i j}^{\mu} \gG_{\ga \mu}^{\ga} - C_{\ga i}^{\mu}
\gG_{\mu j}^{\ga}\\
=&\ ^h R_{ij} - e_i \gG_{\theta j}^{\theta} - \gG_{\theta j}^{\theta} \gG_{i
\theta}^{\theta} - \gG_{k j}^{\theta} \gG_{i \theta}^k - \gG_{\theta j}^k \gG_{i
k}^{\theta}\\
&\ \quad + \gG_{i j}^{\theta} \gG_{k \theta}^k - C_{k i}^{\theta} \gG_{\theta
j}^k - C_{\theta i}^k \gG_{k j}^{\theta}\\
=&\ ^h R_{ij} - \frac{\phi}{2} \FF_g - \N_i \left( \frac{1}{2 \phi} \N_j \phi
\right) - \frac{1}{4 \phi^2} \N_i \phi \N_j \phi\\
=&\ ^h R_{ij} - \frac{\phi}{2} \FF_g - \frac{1}{2 \phi} \N_i \N_j \phi +
\frac{1}{4 \phi^2} \N_i \phi \N_j \phi.
\end{align*}
Next we have
\begin{align*}
R_{\theta i} =&\ e_{\ga} \gG_{\theta i}^{\ga} - e_{\theta} \gG_{\ga i}^{\ga} -
\gG_{\ga i}^{\mu} \gG_{\theta \mu}^{\ga} + \gG_{\theta i}^{\mu} \gG_{\ga
\mu}^{\ga} - C_{\ga \theta}^{\mu} \gG_{\mu i}^{\ga}\\
=&\ e_j \gG_{\theta i}^j - \gG_{\theta i}^{j} \gG_{\theta j}^{\theta} - \gG_{j
i}^{\theta} \gG_{\theta \theta}^j + \gG_{\theta i}^{\theta} \gG_{j \theta}^j +
\gG_{\theta i}^j \gG_{\theta j}^{\theta}\\
=&\ e_j \left( - \frac{\phi}{2} h^{jl} F_{li} \right) - \left( - \frac{\phi}{2}
h^{jl} F_{li} \right) \left( \frac{1}{2 \phi} \N_j \phi \right) - \left(-
\frac{1}{2} F_{ji} \right) \left( -\frac{1}{2} \N^j \phi \right)\\
&\ \qquad + \left( \frac{1}{2 \phi} \N_i \phi \right) \left( - \frac{\phi}{2}
h^{jl} F_{lj} \right) + \left( - \frac{\phi}{2} h^{jl} F_{li} \right) \left(
\frac{1}{2 \phi} \N_j \phi \right)\\
=&\ \frac{\phi}{2} d^*_h F_i - \frac{3}{4} \left(\N \phi \hook F\right)_i.
\end{align*}
Lastly we have
\begin{align*}
R_{\theta \theta} =&\ \del_i \gG_{\theta \theta}^i  - \gG_{\ga \theta}^{\mu}
\gG_{\mu \theta}^{\ga} + \gG_{\theta \theta}^{\mu} \gG_{\mu \ga}^{\ga}\\
=&\ - \frac{1}{2} \del_i \N^i \phi - \gG_{\theta \theta}^i \gG_{i
\theta}^{\theta} - \gG_{i \theta}^{\theta} \gG_{\theta \theta}^i - \gG_{i
\theta}^j \gG_{j \theta}^i + \gG_{\theta \theta}^i \gG_{i \theta}^{\theta}\\
=&\ - \frac{1}{2} \gD \phi + \frac{1}{4 \phi} \brs{\N \phi}^2 + \frac{\phi^2}{4}
\brs{F}_h^2.
\end{align*}
\end{proof}
\end{lemma}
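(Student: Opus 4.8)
The plan is to compute all components of the Ricci tensor directly in the adapted moving frame $\{e_i, \dth\}$ introduced above, exploiting the $S^1$-invariance of the data. First I would record the algebraic structure of the frame: the only nonvanishing structure constants are $C_{ij}^\theta = -F_{ij}$, arising from $[e_i,e_j] = -F_{ij}\dth$, while all vertical derivatives vanish by invariance (so $\dth\phi = 0$, $\dth A_i = 0$) and $g(e_i,e_j) = h_{ij}$ is constant along the fiber. To suppress nuisance terms I would then specialize to the point $p \in B$ where $h$ is in normal coordinates, so that $^h\gG_{ij}^k(p) = 0$ and $\del_k h_{ij}(p) = 0$; this kills the products built from bare base Christoffel symbols without affecting the manifestly tensorial expressions $^hR_{ij}$, $\N_i\N_j\phi$, $\gD\phi$, and $\brs{F}^2$ that occur on the right-hand side.

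The first substantive step is to derive the Christoffel symbols (\ref{KKChris}) from the Koszul formula $2g(\N_X Y,Z) = Xg(Y,Z) + Yg(X,Z) - Zg(X,Y) + g([X,Y],Z) - g([Y,Z],X) + g([Z,X],Y)$, feeding in the commutators above and the block form $g = \phi\,\theta\otimes\theta + h$. The geometric content is then transparent: $\gG_{ij}^\theta = -\tfrac12 F_{ij}$ is the O'Neill-type $A$-tensor measuring the failure of the horizontal distribution to be integrable; $\gG_{\theta\theta}^k = -\tfrac12\N^k\phi$ and $\gG_{i\theta}^\theta = \tfrac{1}{2\phi}\N_i\phi$ encode the second fundamental form of the fibers through the warping function $\phi$; and $\gG_{i\theta}^k = \tfrac{\phi}{2}h^{kl}F_{il}$ mixes the two. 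I have checked these four identities against the Koszul formula and they reproduce (\ref{KKChris}) exactly.

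With these in hand I would substitute into the moving-frame Ricci formula $R_{\gb\gg} = e_\ga \gG_{\gb\gg}^\ga - e_\gb \gG_{\ga\gg}^\ga - \gG_{\ga\gg}^\mu \gG_{\gb\mu}^\ga + \gG_{\gb\gg}^\mu \gG_{\ga\mu}^\ga - C_{\ga\gb}^\mu \gG_{\mu\gg}^\ga$ for each of the three index types. For $R_{ij}$ the base term $^hR_{ij}$ separates off, the $A$-tensor contributes the quadratic $-\tfrac{\phi}{2}\FF_g$ in the curvature $F$, and the remaining pieces assemble into $-\tfrac{1}{2\phi}\N_i\N_j\phi + \tfrac{1}{4\phi^2}\N_i\phi\N_j\phi$ once $e_i(\tfrac{1}{2\phi}\N_j\phi)$ is combined with the relevant products of symbols. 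For $R_{i\theta}$ the divergence $\tfrac{\phi}{2}d^*_h F_i$ emerges from $e_j\gG_{\theta i}^j$, while the several terms of the form $(\N\phi\hook F)_i$ collect with coefficient $-\tfrac34$. For $R_{\theta\theta}$ one gathers the Laplacian $-\tfrac12\gD\phi$, the gradient-squared term $\tfrac{1}{4\phi}\brs{\N\phi}^2$, and the positive curvature term $\tfrac{\phi^2}{4}\brs{F}^2$ coming from squaring the $A$-tensor.

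The main obstacle is purely one of bookkeeping in a non-holonomic frame. One must be careful to retain the structure-constant correction $-C_{\ga\gb}^\mu \gG_{\mu\gg}^\ga$ in the Ricci formula, which is easy to drop by false analogy with the coordinate case, and to account for every contribution of $F$ and of the $\phi$-derivatives, since the characteristic coefficients $-\tfrac34$ and $\tfrac{\phi^2}{4}$ only appear after all such terms are combined. The normal-coordinate normalization at $p$ together with the vertical invariance are precisely what render this tabulation finite and manageable.
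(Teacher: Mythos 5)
Your proposal is correct and follows essentially the same route as the paper: both compute the Christoffel symbols of $g$ in the adapted non-holonomic frame $\{e_i, \dth\}$ with $h$ in normal coordinates at a point of $B$, and then substitute into the frame Ricci formula retaining the structure-constant correction $-C_{\ga\gb}^{\mu}\gG_{\mu\gg}^{\ga}$. The only cosmetic difference is that you make the Koszul-formula derivation of (\ref{KKChris}) and its O'Neill-tensor interpretation explicit, where the paper simply states the symbols.
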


\begin{lemma} \label{Hsqcalc} Let $(M^n, g, H)$ be $S^1$-invariant data.  Then
\begin{align*}
\HH_{ij} =&\  \frac{2}{\phi} \YY_{ij} + \ZZ_{ij},\\
\HH_{i\theta} =&\ \IP{ e_i \hook Z, Y },\\
\HH_{\theta \theta} =&\ \brs{Y}^2.
\end{align*}
\begin{proof} Using Lemma \ref{Hdecomp} we have
\begin{align*}
 \HH_{ij} =&\ g^{\ga \gb} g^{\gg\gd} H_{i \ga \gg} H_{j \gb \gd}\\
 =&\ \frac{1}{\phi} h^{kl} H_{i \theta k} H_{j \theta k} + \frac{1}{\phi} h^{kl}
H_{i k \theta} H_{j l \theta} + h^{kl}h^{mn} H_{ikm} H_{jln}\\
=&\ \frac{2}{\phi} h^{kl} Y_{ik} Y_{jl} + h^{kl} h^{mn} Z_{ikm} Z_{jln}\\
=&\ \frac{2}{\phi} \YY_{ij} + \ZZ_{ij}.
\end{align*}
Again by Lemma \ref{Hdecomp},
\begin{align*}
\HH_{i\theta} =&\ \IP{e_i \hook H, \dth \hook H}\\
=&\ \IP{ e_i \hook \left( \theta \wedge Y + Z \right), \dth \hook (\theta \wedge
Y + Z)}\\
=&\ \IP{ \theta \wedge \left( e_i \hook Y) + e_i \hook Z, Y \right}\\
=&\ \IP{ e_i \hook Z, Y },
\end{align*}
where the last line follows since $\theta$ is $g$-orthogonal to basic forms.
Lastly, using Lemma \ref{Hdecomp} we obtain
\begin{align*}
\HH_{\theta \theta} =&\ \IP{ \dth \hook H, \dth \hook H} = \brs{Y}^2.
\end{align*}
\end{proof}
\end{lemma}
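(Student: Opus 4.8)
The plan is to carry out the computation in the adapted frame $\{e_i, \dth\}$ already introduced for the curvature calculations, exploiting that in this frame the metric is block diagonal. Indeed, by Lemma \ref{metricdecomp} we have $\theta = g(\dth,\cdot)/\phi$, so $g(\dth, e_i) = \phi\,\theta(e_i) = 0$ because $e_i\hook\theta = 0$; together with $g(e_i,e_j)=h_{ij}$ and $g(\dth,\dth)=\phi$ this yields the inverse-metric components $g^{\theta\theta}=\tfrac{1}{\phi}$, $g^{kl}=h^{kl}$, and $g^{k\theta}=0$. This orthogonality of the horizontal distribution to the fiber is the structural fact that makes everything split.

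First I would expand each component $\HH_{\ga\gb}=\IP{i_{e_\ga}H, i_{e_\gb}H}$ as the double contraction $g^{\ga\gb}g^{\gg\gd}H_{i\ga\gg}H_{j\gb\gd}$ and break the summation over the repeated indices into the fiber direction $\theta$ and the horizontal directions. Since $g^{k\theta}=0$, each pair of contracted indices is forced to be either entirely vertical or entirely horizontal; the all-vertical term vanishes because $H$ is a three-form, leaving for $\HH_{ij}$ two mixed terms carrying a factor $\tfrac{1}{\phi}$ and one purely horizontal term.

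Next I would substitute the decomposition $H = \theta\wedge Y + Z$ of Lemma \ref{Hdecomp}. Using $\theta(e_i)=0$, $\theta(\dth)=1$, and that $Y,Z$ are basic (so $\dth\hook Y = \dth\hook Z = 0$), I would record the component identities $H_{i\theta k}=-Y_{ik}$, $H_{ik\theta}=Y_{ik}$, and $H_{ikm}=Z_{ikm}$. Inserting these, the two mixed terms each reduce to $\tfrac{1}{\phi}h^{kl}Y_{ik}Y_{jl}$ and so combine to $\tfrac{2}{\phi}\YY_{ij}$, while the horizontal term is exactly $\ZZ_{ij}$, giving the first formula. For the remaining two components I would use that $i_{\dth}H = \dth\hook H = Y$: then $\HH_{\theta\theta}=\IP{Y,Y}=\brs{Y}^2$ at once, and $\HH_{i\theta}=\IP{i_{e_i}H, Y}$, where $i_{e_i}H = -\theta\wedge(e_i\hook Y) + e_i\hook Z$.

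The one point requiring care is discarding the term $\theta\wedge(e_i\hook Y)$ in $\HH_{i\theta}$: here I would invoke that $\theta$ is $g$-orthogonal to basic forms, a consequence of the same block-diagonal structure, so that any two-form with a leg along $\theta$ pairs to zero against the purely horizontal $Y$. This leaves $\HH_{i\theta}=\IP{e_i\hook Z, Y}$. Beyond this observation the argument is routine bookkeeping of antisymmetrizations and contraction indices, and I do not anticipate any substantive obstacle.
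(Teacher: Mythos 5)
Your proposal is correct and follows essentially the same route as the paper's proof: expand $\HH$ in the adapted frame $\{e_i,\dth\}$ using the block-diagonal form of $g$ (so $g^{k\theta}=0$, $g^{\theta\theta}=\tfrac{1}{\phi}$), substitute the decomposition $H=\theta\wedge Y+Z$ from Lemma \ref{Hdecomp}, and discard the $\theta\wedge(e_i\hook Y)$ term by orthogonality of $\theta$ to basic forms. If anything, your write-up is slightly more careful than the paper's (you record the component identities $H_{i\theta k}=-Y_{ik}$, $H_{ik\theta}=Y_{ik}$ explicitly and get the sign of $e_i\hook(\theta\wedge Y)$ right), but the substance is identical.
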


\begin{lemma} \label{fldstrdecomp} Let $(M^n, g, H)$ be $S^1$-invariant data. 
Then
\begin{align*}
(d^*_g H)_{i \theta} =&\ d^*_h Y_i - \frac{\phi}{2} \IP{ e_i \hook Z, F} +
\frac{1}{2 \phi} \left( \N \phi \hook Y \right)_i\\
(d^*_g H)_{ij} =&\ \left(d^*_h Z \right)_{ij} - \frac{1}{2 \phi} \left( \N
\phi \hook Z \right)_{ij}.
\end{align*}
\begin{proof}
First
\begin{align*}
\left(d^*_g H \right)_{ij} =&\ - g^{\ga\gb} D_\ga H_{\gb ij}\\
=&\ - g^{\ga \gb} \left( \del_{\ga} H_{\gb i j} - \gG_{ \ga\gb}^{\mu} H_{\mu i
j}
- \gG_{\ga i}^{\mu} H_{\gb \mu j} - \gG_{\ga j}^{\mu} H_{\gb i \mu} \right)\\
=&\ \left(d^*_h Z \right)_{ij} + \frac{1}{\phi} \gG_{\theta \theta}^k Z_{k i j}
+ h^{kl} \gG_{k i}^{\theta} H_{l \theta j} + \frac{1}{\phi} \gG_{\theta i}^k
H_{\theta k j}\\
&\ + \frac{1}{\phi} \gG_{\theta j}^k H_{\theta i k} + h^{kl} \gG_{kj}^{\theta}
H_{l i \theta}\\
=&\ \left(d^*_h Z \right)_{ij} - \frac{1}{2 \phi} \left( \N \phi \hook Z
\right)_{ij}.
\end{align*}
Second
\begin{align*}
\left(d^*_g H \right)_{i\theta} =&\ - g^{\ga \gb} D_{\ga} (H_g)_{\gb i
\theta}\\
=&\ - g^{\ga \gb} \left( e_{\ga} H_{\gb i \theta} - \gG_{\ga \gb}^{\mu} H_{\mu i
\theta} - \gG_{\ga i}^{\mu} H_{\gb \mu \theta} - \gG_{\ga \theta}^{\mu} H_{\gb i
\mu} \right)\\
=&\ - h^{kl} D_k H_{l i \theta} + \frac{1}{\phi} \gG_{\theta \theta}^{j} H_{j i
\theta} + \frac{1}{\phi} \gG_{\theta \theta}^{k} H_{\theta i k} + h^{kl} \gG_{k
\theta}^m H_{l i m} + h^{kl} \gG_{k \theta}^{\theta} H_{l i \theta}\\
=&\ d^*_h Y_i + h^{kl} \left( - \frac{\phi}{2} h^{mn} F_{nk} \right) \left(
Z_{lim}
\right) + h^{kl} \left( \frac{1}{2 \phi} \N_k \phi \right) \left(Y_{li}
\right)\\
=&\ d^*_h Y_i - \frac{{\phi}}{2} \IP{ e_i \hook Z, F} + \frac{1}{2 \phi} \left(
\N
\phi \hook Y \right)_i.
\end{align*}
\end{proof}
\end{lemma}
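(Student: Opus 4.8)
The plan is to compute both components directly from the pointwise formula for the codifferential, $(d^*_g H)_{\beta\gamma} = - g^{\alpha\delta}(D_{e_\alpha} H)(e_\delta, e_\beta, e_\gamma)$, carried out in the adapted frame $\{e_i, \dth\}$ introduced above. The essential structural input is that in this frame the metric is block diagonal, with $g(e_i, e_j) = h_{ij}$, $g(e_i, \dth) = 0$, and $g(\dth, \dth) = \phi$, so the inverse-metric contraction splits cleanly into a \emph{base part} $- h^{kl}(D_{e_k} H)(e_l, \cdot, \cdot)$ and a \emph{fiber part} $-\tfrac{1}{\phi}(D_{\dth} H)(\dth, \cdot, \cdot)$. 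I would treat these two pieces separately and recombine at the end.

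First I would substitute the decomposition $H = \theta \wedge Y + Z$ of Lemma \ref{Hdecomp}. Since $Y$ and $Z$ are basic and $\dth \hook \theta = 1$, $e_i \hook \theta = 0$, this gives $H(\dth, e_i, e_j) = Y_{ij}$ and $H(e_i, e_j, e_k) = Z_{ijk}$, while every contraction of $Y$ or $Z$ against $\dth$ vanishes. Expanding each $(D_{e_\alpha} H)$ in the usual way and inserting the Christoffel symbols \eqref{KKChris} of Lemma \ref{Riccicalc}, the $S^1$-invariance forces $\dth(H_{\cdots}) = 0$ and $e_k(Z_{lij}) = \del_k Z_{lij}$ as a base derivative. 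Consequently the base frame-derivatives together with the base-metric Christoffels ${}^h\gG$ reassemble precisely into $(d^*_h Z)_{ij}$ for the first component and into $(d^*_h Y)_i$ for the second.

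The remaining work is to collect the terms carrying the mixed Christoffel symbols $\gG_{ij}^\theta = -\tfrac12 F_{ij}$, $\gG_{i\theta}^k = -\tfrac\phi2 h^{kl}F_{li}$, $\gG_{\theta\theta}^k = -\tfrac12 \N^k\phi$, and $\gG_{i\theta}^\theta = \tfrac1{2\phi}\N_i\phi$. For the $(ij)$-component the $\gG_{ij}^\theta$ contraction dies against the symmetric $h^{kl}$ by antisymmetry of $F$; the $\gG_{\theta\theta}^k$ term in the fiber part produces $-\tfrac{1}{2\phi}(\N\phi\hook Z)_{ij}$; and the $F$-against-$Y$ contributions coming from $\gG_{\cdot\,i}^\theta$ in the base part and from $\gG_{\theta\,i}^k$ in the fiber part are equal and opposite, so after one index relabelling they cancel, leaving exactly the stated formula. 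For the $(i\theta)$-component the same organization produces $d^*_h Y_i$ from the base derivatives, the $F$-against-$Z$ contraction $-\tfrac\phi2\IP{e_i\hook Z, F}$ and the $\N\phi$-against-$Y$ contraction $\tfrac1{2\phi}(\N\phi\hook Y)_i$ from the mixed Christoffels $\gG_{k\theta}^m$ and $\gG_{k\theta}^\theta$ in the base part, while the two fiber-part terms (both proportional to $\gG_{\theta\theta}^k$) cancel against one another by the antisymmetry of $Y$.

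The main obstacle is purely bookkeeping: there are many Christoffel contractions, and one must exploit the skew symmetries of $Y$, $Z$, and $F$ together with careful index relabelling to identify which combinations cancel (notably the $FY$ terms in the first component and the paired $\gG_{\theta\theta}^k$ terms in the second) and which survive. No conceptual difficulty arises once the block-diagonal splitting and the vanishing of all $\dth$-contractions of basic forms are used. The one point requiring care is that the frame $\{e_i\}$ is non-holonomic, with $[e_i, e_j] = -F_{ij}\dth$; because the frame Christoffel symbols of \eqref{KKChris} already have antisymmetric part equal to the structure constants, no separate commutator correction to $d^*$ is needed, and the formula above may be applied verbatim.
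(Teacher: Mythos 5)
Your proposal is correct and follows essentially the same route as the paper's proof: expanding $d^*_g H = -g^{\ga\gb}D_\ga H_{\gb\,\cdot\,\cdot}$ in the adapted frame with the block-diagonal metric, inserting the Christoffel symbols of (\ref{KKChris}) and the decomposition $H = \theta\wedge Y + Z$, and then identifying exactly the cancellations the paper uses (the $h^{kl}\gG_{kl}^{\theta}$ term killed by skewness of $F$, the $F$-against-$Y$ terms cancelling between base and fiber parts in the $(ij)$ component, and the two $\gG_{\theta\theta}^k$ fiber terms cancelling by skewness of $Y$ in the $(i\theta)$ component). Your closing remark about the non-holonomic frame is also sound, since the trace formula for $d^*$ is tensorial and the frame connection coefficients in (\ref{KKChris}) already encode the structure constants.
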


\begin{lemma} \label{Hessiandecomp} Let $(M^n, g, H)$ be $S^1$-invariant data,
and let $\ga \in
T^*M$ be basic and $S^1$-invariant.  Then
\begin{align*}
D_i D_j f=&\ \N_i \N_j f, \qquad D_{\theta} D_i f = D_i D_{\theta} f =
\left(\frac{1}{2} \N f^{\sharp} \hook F\right)_i, \qquad
D_{\theta} D_{\theta} f = \frac{1}{2} \left< \N \phi, \N f \right>_h.
\end{align*}
\begin{proof} This follows directly from the the general calculation
\begin{align*}
 D_{I} \ga_{J} =&\ e_{I} \ga_{J} - \gG_{IJ}^{K} \ga_{K}.
\end{align*}
and the calculation of the Christoffel symbols in (\ref{KKChris}).
\end{proof}
\end{lemma}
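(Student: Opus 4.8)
The plan is to treat the Hessian of $f$ as the covariant derivative of the one-form $df$ in the adapted frame $\{e_i, \dth\}$, and to feed in the Christoffel symbols already recorded in (\ref{KKChris}). The decisive simplification comes from the hypotheses on $f$. Since $f$ is basic and $S^1$-invariant, $df$ is a basic one-form, so its components in the adapted frame are $(df)_i = e_i f = \N_i f$ and $(df)_{\theta} = \dth f = 0$. Moreover each component $\N_i f$ is again $S^1$-invariant and is therefore annihilated by $\dth$. These two facts kill most of the terms that would otherwise appear.

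First I would write, for the one-form $\ga = df$,
\begin{align*}
D_I D_J f = D_I (df)_J = e_I (df)_J - \gG_{IJ}^K (df)_K,
\end{align*}
and then specialize to the three index types. For $D_i D_j f$ the term $\gG_{ij}^{\theta}(df)_{\theta}$ drops because $(df)_{\theta} = 0$; since $\gG_{ij}^k = {}^h\gG_{ij}^k$ and $e_i = \del_i$ at the chosen point, what remains is precisely the base Hessian $\N_i \N_j f$. For the mixed component the derivative $\dth (df)_i = \dth(\N_i f)$ vanishes by $S^1$-invariance, leaving only $-\gG_{\theta i}^k (df)_k$, which by the formula for $\gG_{i\theta}^k$ in (\ref{KKChris}) is the asserted contraction of $\N f$ against the curvature $F$; the equality $D_{\theta} D_i f = D_i D_{\theta} f$ is then automatic because $df$ is closed. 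Finally $D_{\theta} D_{\theta} f = - \gG_{\theta\theta}^k (df)_k$, and substituting $\gG_{\theta\theta}^k = -\tfrac{1}{2}\N^k \phi$ produces $\tfrac{1}{2}\IP{\N \phi, \N f}_h$.

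There is no genuine obstacle here: once the hypotheses on $f$ are used to annihilate the $\theta$-component of $df$ and all $\dth$-derivatives of its surviving components, the lemma is an immediate substitution of the Christoffel symbols from Lemma \ref{Riccicalc}. The only points demanding care are bookkeeping ones, namely tracking exactly which Christoffel symbols survive after those vanishings, and matching the normalization and index-raising conventions so that the surviving term $-\gG_{\theta i}^k \N_k f$ is correctly identified with $\tfrac{1}{2}\bigl(\N f^{\sharp}\hook F\bigr)_i$.
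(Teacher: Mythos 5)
Your proposal is correct and is exactly the paper's own argument: the paper's proof consists of precisely the observation that the Hessian components follow from $D_I \ga_J = e_I \ga_J - \gG_{IJ}^K \ga_K$ applied to $\ga = df$ together with the Christoffel symbols recorded in (\ref{KKChris}), which is what you carry out in detail. The only extra content in your write-up is making explicit which terms vanish from $S^1$-invariance of $f$ and of $\N_i f$, which the paper leaves implicit.
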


\begin{lemma} \label{Liederdecomp} Let $(M^n, g, H)$ be $S^1$-invariant data,
and let $f \in
C^{\infty}(M)$ be $S^1$-invariant.  Then
\begin{align*}
\left( D f \hook H \right)_{i \theta} =&\ \left( \N f \hook Y \right)_i,\\
\left( D f \hook H \right)_{i j} =&\ \left( \N f \hook Z \right)_{ij}.
\end{align*}
\begin{proof} This follows immediately from Lemma \ref{Hdecomp}. 
\end{proof}
\end{lemma}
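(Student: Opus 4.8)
The plan is to reduce everything to a single observation: because $f$ is $S^1$-invariant, its $g$-gradient $Df$ is horizontal and in fact coincides with the base gradient $\N f$. Once this is in hand, the two identities fall out immediately from the decomposition $H = \theta \wedge Y + Z$ supplied by Lemma \ref{Hdecomp}.

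First I would compute $Df$ in the adapted frame $\{e_i, \dth\}$ fixed above. Invariance of $f$ gives $\dth f = 0$. Writing $Df = (Df)^i e_i + (Df)^{\theta}\dth$, the defining relation $g(Df, \cdot) = df$ together with the block form of $g$ from Lemma \ref{metricdecomp}, namely $g(e_i,\dth) = 0$, $g(e_i,e_j) = h_{ij}$ and $g(\dth,\dth) = \phi$, yields $g^{i\theta} = 0$ and $g^{ij} = h^{ij}$. Hence
\begin{align*}
(Df)^{\theta} = g^{\theta\theta}\,\dth f = 0, \qquad (Df)^i = h^{ij} e_j f = (\N f)^i,
\end{align*}
so that $Df = \N f$ is horizontal and agrees with the base gradient. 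This is the only nontrivial input.

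Next I would contract $H$ against $Df$. Using Lemma \ref{Hdecomp} and the derivation property of $\hook$,
\begin{align*}
Df \hook H = Df \hook (\theta \wedge Y) + Df \hook Z = \theta(Df)\, Y - \theta \wedge (Df \hook Y) + Df \hook Z.
\end{align*}
Horizontality of $Df$ gives $\theta(Df) = 0$, and since $Y$ and $Z$ are basic the remaining interior products may be taken against the base gradient, $Df \hook Y = \N f \hook Y$ and $Df \hook Z = \N f \hook Z$. Therefore
\begin{align*}
Df \hook H = - \theta \wedge (\N f \hook Y) + \N f \hook Z.
\end{align*}
Reading off components, the term $-\theta \wedge (\N f \hook Y)$ contributes only to the mixed slot, giving $(Df \hook H)_{i\theta} = (\N f \hook Y)_i$, while the basic $2$-form $\N f \hook Z$ supplies the purely horizontal part $(Df \hook H)_{ij} = (\N f \hook Z)_{ij}$, which are the two claimed formulas.

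I do not expect a genuine obstacle: the entire content is the horizontality of the gradient of a basic function, which is precisely the vanishing $g^{i\theta} = 0$ recorded in Lemma \ref{metricdecomp}, and everything else is bookkeeping of interior products together with the facts that $\theta$ annihilates horizontal vectors and that $Y, Z$ annihilate $\dth$.
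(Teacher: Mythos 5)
Your proposal is correct and takes essentially the same approach as the paper, whose entire proof is the one-line remark that the result follows from Lemma \ref{Hdecomp}. You have simply made explicit the details the paper leaves implicit: the block-diagonality of $g$ in the adapted frame forces $Df$ to be horizontal and equal to the lift of $\N f$, after which contracting $H = \theta \wedge Y + Z$ and using that $Y$, $Z$ are basic gives both identities.
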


\subsection{Variational Calculations}

\begin{lemma} \label{variation} Let $(g_t, b_t)$ be a one-parameter family of
$S^1$-invariant data such that
\begin{align*}
\left. \dt g_t \right|_{t=0} =&\ k\\
\left. \dt b_t \right|_{|t=0} =&\ c.
\end{align*}
Let $\theta_t, h_t,$ etc. denote the unique data determining $g_t$ and $b_t$
determined by Lemmas \ref{metricdecomp} and \ref{bflddecomp} respectively.  Then
\begin{align*}
\left. \dt \phi_t  \right|_{t=0} =&\ k \left(\dth, \dth
\right) \\
\left. \dt \theta_t \right|_{t=0} =&\ \frac{k(\dth, \pi_H \cdot)}{\phi}\\
\left. \dt h_t \right|_{t=0} =&\ k(\pi_H \cdot, \pi_H \cdot)\\
\left. \dt \eta_t \right|_{t=0} =&\ \dth \hook c.\\
\left. \dt \mu_t \right|_{t=0} =&\ c -  \frac{k(\dth, \pi_H \cdot)}{\phi} \wedge
\eta- \theta \wedge \left( \dth \hook c \right)
\end{align*}
\begin{proof} First, using the formula $\phi_t = g_t(\dth,
\dth)$, differentiating immediately yields the first
equation.  Next, using Lemma \ref{metricdecomp} we differentiate and obtain
\begin{align*}
\left. \dt \theta_t \right|_{t=0} = \frac{k(\dth, \cdot)}{\phi} - \frac{g(\dth,
\cdot) k(\dth,\dth)}{\phi^2} = \frac{k(\dth, \pi_H \cdot)}{\phi}.
\end{align*}
Next we observe that $h_t = g_t(\pi_{H_t} \cdot, \pi_{H_t} \cdot)$, and so
\begin{align*}
\left. \dt h_t \right|_{t=0} =&\  k(\pi_{H} \cdot, \pi_H \cdot) - g \left(
\left(\dot{\theta} \cdot \right) \dth, \pi_H \cdot \right) - g\left(\pi_H \cdot,
\left(\dot{\theta} \cdot \right) \dth \right)\\
=&\ k(\pi_H \cdot, \pi_H \cdot),
\end{align*}
where the last line follows since $\dth$ is $g$-orthogonal to the image of
$\pi_H$.  Next, by definition, $\eta_t = \dth \hook b_t$. 
Differentiating this we immediately obtain
\begin{align*}
\left. \frac{\del}{\del t} \eta_t \right|_{t=0} = \dth \hook
\left(\left. \dt b_t \right|_{t=0}\right) = \dth \hook c.
\end{align*}
Lastly, we use the formula defining $\mu$ we obtain
\begin{align*}
\left. \dt \mu_t \right|_{t=0} =&\ \left. \dt \left( b_t - \theta_t \wedge
\eta_t \right) \right|_{t=0}\\
=&\ c -  \frac{k(\dth, \pi_H \cdot)}{\phi} \wedge \eta- \theta \wedge \left(
\dth \hook c \right).
\end{align*}
\end{proof}
\end{lemma}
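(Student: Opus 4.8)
The plan is to prove each of the five variational formulas by directly differentiating the defining relations from Lemmas \ref{metricdecomp} and \ref{bflddecomp}, using that these decompositions are canonical and hence differentiate cleanly. The essential structural point is that at $t=0$ we differentiate expressions in which $\dth$ is held fixed (it is the canonical vector field of the $S^1$-action, independent of the metric), so all $t$-dependence enters only through $g_t$ and $b_t$ themselves and through the derived quantities $\phi_t, \theta_t$.

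First I would record $\phi_t = g_t(\dth,\dth)$ and differentiate to get $\dt\phi|_{t=0} = k(\dth,\dth)$ immediately, since $\dth$ does not vary. Next, for $\theta_t = g_t(\dth,\cdot)/\phi_t$ I would apply the quotient rule, producing a term $k(\dth,\cdot)/\phi$ from the numerator and a term $-g(\dth,\cdot)\,\dt\phi/\phi^2$ from the denominator. Recognizing $g(\dth,\cdot)/\phi = \theta$ and $\dt\phi = k(\dth,\dth)$, the second term is $-\theta\, k(\dth,\dth)/\phi$, and combining gives exactly the horizontal projection $k(\dth,\pi_H\cdot)/\phi$, where $\pi_H X = X - \theta(X)\dth$. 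For $h_t = g_t(\pi_{H_t}\cdot,\pi_{H_t}\cdot)$ there are three terms upon differentiating, but the two arising from $\dt\pi_{H_t} = -(\dot\theta\,\cdot)\dth$ each pair $\dth$ against the image of $\pi_H$; since $\dth$ is $g$-orthogonal to horizontal vectors these vanish, leaving only $k(\pi_H\cdot,\pi_H\cdot)$.

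The $b$-side formulas follow the same pattern. Since $\eta_t = \dth\hook b_t$ and $\dth$ is fixed, differentiation passes straight through the interior product to give $\dt\eta|_{t=0} = \dth\hook c$. For $\mu_t = b_t - \theta_t\wedge\eta_t$ I would differentiate the product, using the Leibniz rule: the $\dt b_t$ term gives $c$, the $\dt\theta_t$ term contributes $-(k(\dth,\pi_H\cdot)/\phi)\wedge\eta$ (at $t=0$ one uses $\eta_0 = \eta$), and the $\dt\eta_t$ term contributes $-\theta\wedge(\dth\hook c)$, matching the claimed expression term by term.

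I do not anticipate a genuine obstacle here; the only point requiring care is bookkeeping of which objects are held fixed under differentiation. The orthogonality $g(\dth, \pi_H\,\cdot\,)=0$ is what collapses the middle formula, and the identification of $g(\dth,\cdot)/\phi$ with $\theta$ (so that the quotient-rule remainder reassembles into $\pi_H$) is the one nontrivial recognition step; everything else is a direct application of the product and quotient rules to the canonical formulas of Lemmas \ref{metricdecomp} and \ref{bflddecomp}.
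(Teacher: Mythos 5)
Your proposal is correct and follows essentially the same route as the paper: each formula is obtained by directly differentiating the canonical decompositions of Lemmas \ref{metricdecomp} and \ref{bflddecomp}, using that $\dth$ is metric-independent, that the quotient-rule terms for $\theta_t$ reassemble into $k(\dth,\pi_H\cdot)/\phi$, and that $g$-orthogonality of $\dth$ to horizontal vectors kills the cross-terms in the variation of $h_t$. No gaps; this matches the paper's argument step for step.
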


\begin{lemma} \label{dualvariation} Let $(g_t, b_t)$ be a one-parameter family
of $S^1$-invariant data such that
\begin{align*}
\left. \dt g_t \right|_{t=0} =&\ k\\
\left. \dt b_t \right|_{|t=0} =&\ c.
\end{align*}
Let $\theta_t, h_t,$ etc. denote the unique data determining $g_t$ and $b_t$
determined by Lemmas \ref{metricdecomp} and \ref{bflddecomp} respectively, and
likewise define $\bar{\theta}_t$, etc.  Then
\begin{align*}
\left. \dt \bar{\phi}_t  \right|_{t=0} =&\ - \frac{k(\dth, \dth)}{\phi^2}\\
\left. \dt \bar{\theta}_t \right|_{t=0} =&\ \dth \hook c\\
\left. \dt \bar{h}_t \right|_{t=0} =&\ k(\pi_H \cdot, \pi_H \cdot)\\
\left. \dt \bar{\eta}_t \right|_{t=0} =&\ \frac{k(\dth, \pi_H \cdot)}{\phi}\\
\left. \dt \bar{\mu}_t \right|_{t=0} =&\ c - \left( \theta - \bar{\eta} \right)
\wedge \left( \dth \hook c \right).
\end{align*}
\begin{proof} We use the formulas of Proposition \ref{Tdualrules} and Lemma
\ref{variation} to conclude
\begin{align*}
\left. \dt \bar{\phi}_t \right|_{t=0} =&\ \left. \dt \frac{1}{\phi_t}
\right|_{t=0} = - \frac{1}{\phi^2} \left. \dt \phi_t \right|_{t=0} = -
\frac{k(\dth, \dth)}{\phi^2}.
\end{align*}
Next we compute
\begin{align*}
\left. \dt \bar{\theta}_t \right|_{t=0} =&\ \left. \dt \left( \bar{\theta} +
\eta_t \right) \right|_{t=0} = \left.\dt \eta_t \right|_{t=0} = \dth \hook c.
\end{align*}
Since $\bar{h}_t = h_t$ the third equation follows immediately.  For the fourth
we compute
\begin{align*}
\left. \dt \bar{\eta}_t \right|_{t=0}= \left. \dt \left(\theta_t - \theta\right)
\right|_{t=0} = \left. \dt \theta_t \right|_{t=0} = \frac{k(\dth, \pi_H
\cdot)}{\phi}.
\end{align*}
Lastly we compute
\begin{align*}
\left. \dt \bar{\mu}_t \right|_{t=0} =&\ \left. \dt \left( \mu_t - \eta_t \wedge
\bar{\eta}_{t} \right) \right|_{t=0}\\
=&\ c -  \frac{k(\dth, \pi_H \cdot)}{\phi} \wedge \eta- \theta \wedge \left(
\dth \hook c \right) - \left.\dt \left( \eta_t \wedge \bar{\eta}_t \right)
\right|_{t=0}\\
=&\ c - \frac{k(\dth, \pi_H \cdot)}{\phi} \wedge \eta- \theta \wedge \left( \dth
\hook c \right)\\
&\ - \left( \dth \hook c \right) \wedge \bar{\eta} - \eta \wedge \frac{k(\dth,
\pi_H \cdot)}{\phi}\\
=&\ c - \left( \theta - \bar{\eta} \right) \wedge \left( \dth \hook c \right).
\end{align*}
\end{proof}
\end{lemma}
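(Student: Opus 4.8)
The plan is to obtain each dual variation by differentiating the corresponding T-duality relation of Proposition \ref{Tdualrules} at $t = 0$ and then substituting the variation formulas already established in Lemma \ref{variation}. The essential structural observation is that the background connections $\theta$ and $\bar{\theta}$ are the fixed topological data of the T-dual pair and hence are independent of $t$; only the metric-determined connection $1$-forms $\theta_{g_t}$ and $\bar{\theta}_{\bar{g}_t}$ evolve. This is precisely what makes the relations $\bar{\theta}_t = \bar{\theta} + \eta_t$ and $\bar{\eta}_t = \theta_t - \theta$ differentiate cleanly.

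I would dispatch the first four identities directly. The relation $\bar{\phi}_t = 1/\phi_t$ gives $\dt \bar{\phi}_t = -\phi^{-2}\,\dt\phi_t$ by the chain rule, which is the claimed formula after inserting $\dt\phi_t = k(\dth,\dth)$. The relations $\bar{\theta}_t = \bar{\theta} + \eta_t$ and $\bar{\eta}_t = \theta_t - \theta$ differentiate to $\dt\bar{\theta}_t = \dt\eta_t = \dth\hook c$ and $\dt\bar{\eta}_t = \dt\theta_t = \frac{k(\dth,\pi_H\cdot)}{\phi}$ respectively, once more by Lemma \ref{variation}, while $\bar{h}_t = h_t$ yields the third identity verbatim.

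The one identity requiring genuine simplification — and the step I expect to be the main obstacle, though it is a matter of care rather than depth — is $\bar{\mu}_t = \mu_t - \eta_t\wedge\bar{\eta}_t$. Applying the Leibniz rule and substituting the values of $\eta$, $\bar{\eta}$, $\dt\mu_t$, $\dt\eta_t$, and $\dt\bar{\eta}_t$ from Lemma \ref{variation}, I expect an expression containing both $-\frac{k(\dth,\pi_H\cdot)}{\phi}\wedge\eta$ (inherited from $\dt\mu_t$) and $-\eta\wedge\frac{k(\dth,\pi_H\cdot)}{\phi}$ (from the Leibniz term). Since these are wedge products of two $1$-forms they are negatives of one another and cancel; re-expressing the remaining $-(\dth\hook c)\wedge\bar{\eta}$ as $+\bar{\eta}\wedge(\dth\hook c)$, again by anticommutativity of $1$-forms, collapses the survivors to $c - (\theta - \bar{\eta})\wedge(\dth\hook c)$. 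The care required is to evaluate $\eta$ and $\bar{\eta}$ correctly at $t=0$, apply the product rule without dropping a sign, and recognize these two cancellations; every other identity is an immediate substitution into Proposition \ref{Tdualrules}.
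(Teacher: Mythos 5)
Your proposal is correct and takes essentially the same approach as the paper: differentiate the relations of Proposition \ref{Tdualrules} (with $\theta$, $\bar{\theta}$ fixed in $t$) and substitute Lemma \ref{variation}, handling the $\bar{\mu}$ identity by the Leibniz rule on $\bar{\mu}_t = \mu_t - \eta_t \wedge \bar{\eta}_t$ with the same two anticommutativity cancellations. The paper's own computation of $\dt \bar{\mu}_t$ proceeds exactly as you describe.
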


\subsection{Preservation of T duality under the B-field flow}

\begin{proof}[Proof of Theorem \ref{Tdualflow}]

For the proof we will let $(g_t, b_t)$ be the solution to (\ref{RGflow}) with
initial condition $(g,b)$.  Now for all $t$ such that this flow exists smoothly,
let
$(\bar{g}_t, \bar{b}_t)$ denote the generalized metric which is dual to $(g_t,
b_t)$.  We aim to show that this one parameter family $(\bar{g}_t, \bar{b}_t)$
is a solution to (\ref{GFRGflow}) with $\bar{f}_t = f_t + \log \phi_t$, which
will
finish the proof.

To show that $(\bar{g}_t, \bar{b}_t)$ is the required solution to
(\ref{GFRGflow}) we will use
the decomposition of the data into $(\phi, \theta, g, \eta, \mu)$ given by
Lemmas \ref{metricdecomp} and \ref{bflddecomp} and compute the evolution of each
component.  In
particular let $(\phi_t, \theta_t, h_t, \eta_t, \mu_t)$ denote the decomposed
data associated to $(g_t, b_t)$, and likewise let $(\bar{\phi}_t,
\bar{\theta}_t, \bar{h}_t, \bar{\eta}_t, \bar{\mu}_t)$ denote the decomposed
data associated to the dualized metric $(\bar{g}_t, \bar{b}_t)$ according to
Proposition \ref{Tdualrules}.  We will
compute evolution equations for $\bar{\phi}_t$ etc. and show that this agrees
with the evolution induced by the B-field flow for the transformed data.  We
proceed with these five calculations.

\subsubsection{Evolution of $\bar{\phi}$}

First observe that by Lemma \ref{variation} and the curvature calculations of
Lemmas
\ref{Riccicalc} and \ref{Hsqcalc} and Lemma \ref{Hessiandecomp} we have that

\begin{align*}
\frac{\del \phi}{\del t} = \gD \phi - \frac{1}{2 \phi} \brs{\N \phi}^2 -
\frac{\phi^2}{2} \brs{F}^2 + \frac{1}{2} \brs{Y_H}^2 + \left< \N \phi, \N f
\right>
\end{align*}
By Lemma \ref{dualvariation} we thus obtain
\begin{align*}
\dt \bar{\phi} =&\ - \frac{1}{\phi^2} \left( \gD \phi - \frac{1}{2 \phi} \brs{\N
\phi}^2  -
\frac{\phi^2}{2} \brs{F}^2_h + \frac{1}{2} \brs{Y_H}^2 + \left< \N \phi, \N f
\right> \right)\\
=&\ \gD \left( \frac{1}{\phi} \right)  - 2 \frac{\brs{\N \phi}^2}{\phi^3}  +
\frac{1}{2 \phi^3} \brs{\N \phi}^2 + \frac{1}{2} \brs{F}^2_h - \frac{1}{2
\phi^2} \brs{Y_H}^2 - \frac{1}{\phi^2} \left<\N \phi, \N f \right>\\
=&\ \left(\gD \bar{\phi} - \frac{1}{2 \bar{\phi}} \brs{\N \bar{\phi}}^2 +
\frac{1}{2} \brs{F}^2_h - \frac{1}{2 \phi^2} \brs{Y_H}^2 \right) - \frac{\brs{\N
\phi}^2}{\phi^3} + \left< \N \bar{\phi}, \N f \right>
\end{align*}
Also we observe that, by Lemma \ref{Hrelation},
\begin{align*}
\brs{F}^2_h - \frac{1}{\phi^2} \brs{Y_H}^2 = \brs{\bar{Y}_{\bar{H}}}^2_h -
\bar{\phi}^2 \brs{\bar{F}}^2_h.
\end{align*}
Also, by Lemma \ref{Hessiandecomp} we have
\begin{align*}
2 \bar{D}_{\theta} \bar{D}_{\theta} \log \phi =&\ \left< \N \bar{\phi}, \N \log
\phi \right>_{\bar{h}} = - \frac{\brs{\N \phi}_h^2}{\phi^3}.
\end{align*}
Combining these calculations and again comparing against Lemmas \ref{Riccicalc}
and \ref{Hsqcalc} we obtain
\begin{align*}
 \dt \bar{\phi} =&\ \left(-2 \bar{\Rc} + \frac{1}{2} \bar{\HH} + 2 \bar{D}^2
(f + \log \phi) \right)_{\theta \theta} = \left(-2 \bar{\Rc} + \frac{1}{2}
\bar{\HH} + L_{\bar{D} (f + \log \phi)} \bar{g}\right)_{\theta \theta}
\end{align*}
as required.
\subsubsection{Evolution of $\bar{\theta}$}

First observe that by Lemma \ref{dualvariation} and \ref{fldstrdecomp} we have
that
\begin{align*}
\left( \dt \bar{\theta} \right)_i =&\ \left(\dth \hook (-
d^*_g
H_g + i_{D f} \hook H ) \right)_i\\
=&\ \left( d^*_g H \right)_{i \theta} - (\N f \hook Y)_i\\
=&\  d^*_h Y_i - \frac{\phi}{2} \IP{e_i \hook Z, F} +
\frac{1}{2 \phi} (\N \phi \hook Y)_i - (\N f \hook Y)_i\\
=&\ - d^*_h \bar{F}_i - \frac{\phi}{2} \IP{e_i \hook Z, F} - \frac{1}{2 \phi}
(\N \phi \hook \bar{F})_i + (\N f \hook \bar{F})_i
\end{align*}
But on the other hand by Lemmas \ref{Riccicalc}, \ref{Hsqcalc} and
\ref{Hessiandecomp}
\begin{align*}
\frac{-2}{\bar{\phi}} \left( \bar{\Rc} - \frac{1}{4} \bar{\HH} - \frac{1}{2}
L_{\bar{D} (f + \log \phi)} \bar{g} \right)_{i\theta} =&\ \frac{-2}{\bar{\phi}}
\left( \bar{\Rc} - \frac{1}{4} \bar{\HH} - \bar{D}^2 (f + \log
\phi) \right)_{i\theta}\\
=&\ - d^*_h \bar{F}_i +
\frac{3}{2 \bar{\phi}}
\left(\N \bar{\phi} \hook \bar{F} \right)_i + \frac{1}{2 \bar{\phi}} \left< e_i
\hook Z_{\bar{H}}, Y_{\bar{H}} \right> + \left(\N (f + \log \phi) \hook \bar{F}
\right)_i\\
=&\ - d^*_h \bar{F}_i - \frac{\phi}{2} \left< e_i \hook Z_H, F \right> -
\frac{1}{2 \phi} \left( \N \phi \hook \bar{F} \right)_i + (\N f \hook
\bar{F})_i.
\end{align*}
Combining these two calculations and using Lemma \ref{variation} again yields
the result.

\subsubsection{Evolution of $\bar{h}$}

First observe that by Lemma \ref{dualvariation} and the curvature calculations
of Lemmas
\ref{Riccicalc} and \ref{Hsqcalc} we have that
\begin{align*}
\dt \bar{h}_{ij} =  \dt h_{ij} =&\ - 2 \left( ^h \Rc - \frac{\phi}{2} \FF -
\frac{1}{2
\phi} \N \N \phi +
\frac{1}{4 \phi^2} \N \phi \otimes \N \phi - \frac{1}{4} \HH - \frac{1}{2} L_{D
f} g \right)_{ij}
\end{align*}
First observe that since $h = \bar{h}$, we have $^h \Rc =\ ^{\bar{h}} \Rc$. 
Next, using Lemma \ref{Hsqcalc} we observe that
\begin{align*}
\left( - \frac{\phi}{2} \FF - \frac{1}{4} \HH \right)_{ij} =&\ \left(-
\frac{\phi}{2} \FF -
\frac{1}{4} \left( \frac{2}{\phi} \bar{\FF} + \ZZ \right) \right)_{ij}\\
=&\ \left( - \frac{\bar{\phi}}{2} \bar{\FF} - \frac{1}{4} \left(
\frac{2}{\bar{\phi}}
\FF + \bar{\ZZ} \right) \right)_{ij}\\
=&\ \left(- \frac{\bar{\phi}}{2} \bar{\FF} - \frac{1}{4} \bar{\HH} \right)_{ij}.
\end{align*}
Furthermore, a direct calculation using Lemma \ref{Hessiandecomp} yields
\begin{align*}
- \frac{1}{2 \phi} \N_i \N_j \phi + \frac{1}{4 \phi^2} \N_i \phi \N_j \phi =&\ -
\frac{\bar{\phi}}{2} \N_i \N_j \left( \frac{1}{\bar{\phi}} \right) + \frac{1}{4
\bar{\phi}^2} {\N}_i \bar{\phi} {\N}_j \bar{\phi}\\
=&\ \frac{\bar{\phi}}{2} \N_i \left( \frac{\N_j \bar{\phi}}{\bar{\phi}^2}
\right) + \frac{1}{4 \bar{\phi}^2} {\N}_i \bar{\phi} {\N}_j \bar{\phi}\\
=&\ \frac{1}{2 \bar{\phi}} \N_i \N_j \bar{\phi} - \frac{1}{\bar{\phi}^2} \N_i
\bar{\phi} \N_j \bar{\phi} + \frac{1}{4 \bar{\phi}^2} {\N}_i \bar{\phi} {\N}_j
\bar{\phi}\\
=&\ - \frac{1}{2 \bar{\phi}} \N_i \N_j \bar{\phi} + \frac{1}{4 \bar{\phi}^2}
\N_i \bar{\phi} \N_j \bar{\phi} + \left( \frac{1}{\bar{\phi}} \N_i \N_j
\bar{\phi} - \frac{1}{\bar{\phi}^2} \N_i \bar{\phi} \N_j \bar{\phi} \right)\\
=&\ - \frac{1}{2 \bar{\phi}} \N_i \N_j \bar{\phi} + \frac{1}{4 \bar{\phi}^2}
\N_i \bar{\phi} \N_j \bar{\phi} + \N_i \N_j \log \bar{\phi}\\
=&\ - \frac{1}{2 \bar{\phi}} \N_i \N_j \bar{\phi} + \frac{1}{4 \bar{\phi}^2}
\N_i \bar{\phi} \N_j \bar{\phi} - \bar{D}_i \bar{D}_j \log \phi.
\end{align*}
Combining these calculations and using Lemma \ref{Hessiandecomp} again yields
\begin{align*}
\dt \bar{h}_{ij} =&\ -2 \left( \ ^{\bar{h}} {\Rc}_{ij} - \frac{1}{2}
\bar{\FF}_{ij} - \frac{1}{2
\bar{\phi}}
{\N}_i {\N}_j \bar{\phi} + \frac{1}{4 \bar{\phi}^2} {\N}_i
{\phi} {\N}_j \bar{\phi}
- \frac{1}{4} \bar{\HH}_{ij} - \bar{D}_i \bar{D}_j \log \phi - \frac{1}{2} L_{\N
f} g \right)\\
=&\ -2 \left( \bar{\Rc} - \frac{1}{4} \bar{\HH} - \frac{1}{2} L_{\bar{D} (f +
\log \phi)} \bar{g}
\right)_{ij}
\end{align*}
as required.

\subsubsection{Evolution of $\eta$}

First observe that by Lemma \ref{variation}, the curvature calculations of
Lemmas
\ref{Riccicalc} and \ref{Hsqcalc}, Lemma \ref{Hessiandecomp}, and Lemma
\ref{Hrelation} we have that
\begin{align*}
\left(\dt \bar{\eta}\right)_i =  \left(\dt \theta\right)_i =&\ - d^*_h F_i +
\frac{3}{2 \phi} \left( \N \phi \hook F \right)_i - \frac{1}{2 \phi} \IP{e_i
\hook Z, \bar{F}} + (\N f \hook F)_i.
\end{align*}
On the other hand by Lemma \ref{fldstrdecomp}, and \ref{Hrelation} we have
\begin{align*}
\left( \bar{\dth} \hook - d^*_{\bar{g}} \bar{H} \right)_{i}
=&\ d^*_{\bar{h}} \bar{Y}_i + \frac{1}{2 \bar{\phi}} \left( \N \bar{\phi} \hook
\bar{Y}
\right)_i - \frac{\bar{\phi}}{2} \left< e_i \hook \bar{Z},
\bar{F} \right>\\
=&\ - d^*_h F_i + \frac{1}{2 \phi} \left( \N \phi \hook F \right)_i - \frac{1}{2
\phi} \IP{e_i \hook Z, \bar{F}}.
\end{align*}
Also, by Lemma \ref{Liederdecomp} and \ref{Hrelation} we have
\begin{align*}
\left( \bar{\dth} \hook i_{\bar{D} (f + \log \phi)} \bar{H}
\right)_i =&\ (\N f \hook F)_i + \frac{1}{\phi} \left( \N \phi \hook F
\right)_i.
\end{align*}
Collecting these calculations gives the required equality.

\subsubsection{Evolution of $\mu$}

Directly calculating using Lemmas \ref{fldstrdecomp}, \ref{variation}, and
\ref{dualvariation}
 we compute
\begin{align*}
\left( \dt \bar{\mu} \right)_{ij} =&\ \left( \dt \mu \right)_{ij}\\
=&\ \left( - d^*_g H + i_{\N f} \hook H \right)_{ij}\\
=&\ \left( - d^*_h Z  + \frac{1}{2 \phi} \left(\N \phi \hook Z \right) +
i_{\bar{D} f} \hook H \right)_{ij}\\
=&\ \left( - d^*_h \bar{Z} -\frac{1}{2 \bar{\phi}} \left( \N \bar{\phi} \hook
\bar{Z} \right) + i_{\bar{D} f} \hook H \right)_{ij}\\
=&\ \left( - d^*_{\bar{g}} \bar{H} - \frac{1}{\bar{\phi}} \left( \N \bar{\phi}
\hook \bar{Z} \right) + i_{\bar{D} f} \hook H \right)_{ij}\\
=&\ \left( - d^*_{\bar{g}} \bar{H} + \frac{1}{\phi} \left( \N \phi \hook \bar{Z}
\right) + i_{\bar{D} f} \hook H \right)_{ij}\\
=&\ \left( - d^*_{\bar{g}} \bar{H} + i_{\bar{D} (f + \log \phi)} \hook \bar{H}
\right)_{ij}
\end{align*}
\end{proof}

\subsection{Examples}

\begin{ex} We begin with a simple example to illustrate how
T-duality affects solutions to (\ref{RGflow}).  Let $M \cong S^3$ and consider
the Hopf fibration $S^1 \to S^3 \to S^2$, and let $\theta$ denote the connection
one form on $S^3$ satisfying 
$d \theta = \gw_{S^2}$, where $\gw_{S^2}$ denotes the standard area form on
$S^2$, and furthermore let $H = 0$.  Next let $\bar{M} \cong S^2 \times S^1$,
and consider the trivial
fibration $S^1 \to S^1 \times S^2 \to S^2$.  Let $\bar{\theta}$ denote the
pullback of the canonical line element on $S^1$ to $\bar{M}$, and let $\bar{H} =
- \bar{\theta} \wedge \gw_{S^2}$.  Certainly $d \bar{H} = 0$.  
Moreover, with the notation of \S \ref{Tdualtop}, observe that
\begin{align*}
p^* H - \bar{p}^* \bar{H} = \bar{p}^* \left( \gw_{S^2} \wedge \bar{\theta}
\right) = d p^*\theta \wedge \bar{\theta} = d \left( p^* \theta \wedge \bar{p}^*
\bar{\theta} \right).
\end{align*}
Thus $(M, H, \theta)$ and $(\bar{M}, \bar{H}, \bar{\theta})$ are topologically
T-dual.  Let $g_{S^2}$ denote the round metric on $S^2$ and consider an
$S^1$-invariant metric of the form
\begin{align*}
g = A \theta \otimes \theta + B g_{S^2}.
\end{align*}
Observe that by applying Proposition \ref{Tdualrules} we obtain that $(g, 0)$ is
T-dual to $(\bar{g}, \bar{b})$ with
\begin{gather} \label{Hopfdual}
\begin{split}
\bar{g} =&\ \frac{1}{A} \bar{\theta} \otimes \bar{\theta} + B g_{S^2},\\
\bar{b} =&\ 0.
\end{split}
\end{gather}
The solution to (\ref{RGflow}) with initial condition $(g, 0)$ on $M$ is given
by the Ricci flow, which takes the form
\begin{align*}
\dot{A} =&\ - \frac{A^2}{B^2}, \qquad \dot{B} = - 2 + \frac{A}{B}.
\end{align*}
Expressing the T-dual data as $\bar{g} = \bar{A} \bar{\theta} \otimes
\bar{\theta} + \bar{B} g_{S^2}$ and using (\ref{Hopfdual}) we obtain the
evolution equation for $\bar{g}$ as
\begin{align*}
\dot{\bar{A}} =&\ \frac{1}{B^2}, \qquad \dot{\bar{B}} = -2 + \frac{A}{B},
\end{align*}
which, comparing against Lemmas \ref{Riccicalc} and \ref{Hsqcalc}, is the
solution to (\ref{RGflow}).  Observe that $M$ shrinks to a round point under the
flow, whereas on $\bar{M}$ the $S^2$ shrinks to a point while the $S^1$ fiber
blows up.
\end{ex}

\begin{ex} More generally, we may let $M \cong S^{2n+1}$ and consider the Hopf
fibration 
$S^1 \to S^{2n+1} \to \mathbb C \mathbb P^n$, and let $\theta$ denote the
connection
one form on $S^{2n+1}$ satisfying $d \theta = \gw_{FS}$, where $\gw_{FS}$ is the
K\"ahler
form of the Fubini-Study metric on $\mathbb C \mathbb P^n$, and furthermore let
$H = 0$.  Next let $\bar{M} \cong \mathbb C \mathbb P^n \times S^1$, and
consider the trivial
fibration $S^1 \to S^1 \times \mathbb C \mathbb P^n \to \mathbb C \mathbb P^n$. 
Let $\bar{\theta}$ denote the pullback of the canonical line element on $S^1$ to
$\bar{M}$, and let $\bar{H} = - \bar{\theta} \wedge \gw_{S^2}$.  As in the
previous example one easily checks that $(M, H, \theta)$ and $(\bar{M}, \bar{H},
\bar{\theta})$ are topologically T-dual.

Now let $g_0$ denote any metric on $S^{2n+1}$ with positive curvature operator.
Consider the solution to (\ref{RGflow}) with initial condition $(g_0, 0)$.  One 
observes that by the maximum principle the condition $H_0 \equiv 0$ is
preserved 
by (\ref{RGflow}), and so the solution $(g_t, b_t) = (g_t, 0)$, where $g_t$ is
the
 unique solution to Ricci flow with initial condition $g_0$.  By the theorem of
Bohm-Wilking \cite{BW}, we have that $g_t$ exists on some finite time interval
$[0, T)$, and converges to a round point as $t \to T$.  It follows from
Proposition \ref{Tdualrules} that the dual solution $(\bar{g}_t, \bar{b}_t)$
also exists on a finite time interval, asymptotically converging to a solution
which
homothetically shrinks the $\mathbb C \mathbb P^2$ base and expands the $S^1$
fiber, analogously to the previous example.
\end{ex}

\begin{remark} Any of the Ricci flow ``sphere theorems'' in odd dimension (for
instance
\cite{BS}, \cite{Ham3}) can be used in the above example to
generate long time existence results for (\ref{RGflow}).
\end{remark}

\bibliographystyle{hamsplain}

\end{document}